\DeclarePairedDelimiter{\abs}{\lvert}{\rvert}
\DeclareMathOperator{\ee}{\mathbb{E}}			
\DeclareMathOperator{\prob}{{\mathds{P}}}			
\newcommand*\doTRANS[2]{\raisebox{\depth}{$\m@th#1\intercal$}}
\NewDocumentCommand\AVG{s}
    {\IfBooleanTF#1%
      {\frac{1}{\abs{N  }} \sum_{i \in N  }}
      {\frac{1}{\abs{N^m}} \sum_{i \in N^m}}
    }
\title{Optimal Symmetric Strategies in Multi-Agent Systems with Decentralized Information}
\author{Sagar Sudhakara and  Ashutosh Nayyar
\thanks{S. Sudhakara and A. Nayyar are with the Department of Electrical \& Computer
Engineering, University of Southern California, Los Angeles, CA 90089
(e-mail: sagarsud@usc.edu; ashutosn@usc.edu). }}
\begin{document}
\maketitle
\thispagestyle{empty}
\pagestyle{empty}

\begin{abstract}
    
We consider a cooperative multi-agent system consisting of a  team of agents with decentralized information.
Our focus is on the design of \emph{symmetric} (i.e. identical) strategies for the agents in order  to optimize a finite horizon team objective. 
We start with a general information structure and then consider some special cases. The constraint of using symmetric strategies introduces new features and complications in the team problem. For example, we show  in a simple example that randomized symmetric strategies may outperform deterministic symmetric strategies. We also discuss why some of the known approaches for reducing agents’ private information in teams
may not work under the constraint of symmetric
strategies. We then adopt the common information approach  for our
problem and modify it to accommodate the use of symmetric
strategies. This results in a common information based dynamic
program where each step involves minimization over a single
function from the space of an agent’s private information
to the space of probability distributions over actions. We present specialized models where private
information can be reduced using simple dynamic program
based arguments.


\end{abstract}

\section{Introduction}\label{sec;Introduction}
The problem of sequential decision-making by a team of collaborative agents operating in an uncertain environment has received significant attention in the recent control (e.g. \cite{nayyar2010optimal, nayyar2013decentralized, mahajan2013optimal,sudhakara2021optimal,kartik2022optimal}) and artificial intelligence (e.g. \cite{seuken2008formal,kumar2015probabilistic,rashid2018qmix,hu2019simplified, szer2005maa}) literature. The goal in such problems is to design decision/control strategies for the multiple agents in order to optimize a performance metric for the team.


In some cooperative multi-agent (or team) problems, the agents are essentially identical and interchangeable. For example, consider a team of autonomous agents operating in an environment. The agents may have identical sensors that they use to observe their local surroundings   and they may have identical action spaces. For teams with such identical agents,  it may be convenient for the designer to design identical decision/control strategies for the agents. This would be particularly helpful if the number of agents is large --instead of designing $n$ different strategies for $n$ agents in a team, the designer needs to design just one strategy for all agents. Identical strategies may also be necessary for other practical and regulatory reasons. For example, a self-driving car company would be expected to have the same control algorithm on all its cars. Another reason for using identical strategies arises in situations where agents don't  have any individualized identities. This can happen in  settings where the population of the agents is not fixed and agents are unaware of the total number of agents currently present or  their own index in the population. An example of such a situation for a multi-access communication problem is described in  \cite{neely2021repeated}. When an agent doesn't know its own index ("Am I agent 1 or agent 2?"), it makes sense to use symmetric (i.e. identical) strategies  for all agents irrespective of their index.   In this paper, we will focus on the design of identical strategies for a team of cooperative agents. We will refer to such strategies as \emph{symmetric strategies}. 



Our focus  is on designing symmetric strategies to optimize a finite horizon team objective. We start with a general information structure and then consider some special cases. The constraint of using symmetric strategies introduces new features and complications in the team problem. For example, when agents in a team are free to  use individualized strategies,   it is well-known that agents can be restricted to deterministic strategies without loss of optimality  \cite{yuksel2013stochastic}. However, we show in a simple example that randomized strategies may be helpful when the agents are constrained to use symmetric strategies.


We adopt the common information approach \cite{nayyar2013decentralized} for our problem and modify it  to accommodate the use of symmetric strategies. This results in a common information based dynamic program where each step involves minimization over a single function from the space of an agent's private information to the space of probability distributions over actions.  The complexity of this dynamic program depends in large part on the size of the private information space. We discuss some known approaches for reducing agents' private information  and why they may \emph{not} work under the constraint of symmetric strategies. We present two specialized models where private information can be reduced using simple dynamic program based arguments.


\textit{Notation:}
Random variables are denoted by upper case letters (e.g. $X$), their realization with lower case letters (e.g. $x$), and their space of realizations by script letters (e.g. $\mathcal{X}$). Subscripts denote time and superscripts denote agent index; e.g., $X^i_t$ denotes the state of agent $i$ at time $t$. The short hand notation $X^i_{1:t}$ denotes the collection $(X^i_1,X^i_2,...,X^i_t)$.  $\triangle(\mathcal{X})$ denotes the probability simplex for the space $\mathcal{X}$. $\prob(A)$ denotes the probability of an event $A$. $\ee[X]$ denotes the expectation of a random variable $X$. $\mathds{1}_A$ denotes the indicator function of event $A$. For simplicity of notation, we use $\prob(x_{1:t},u_{1:t-1})$ to denote $\prob(X_{1:t}=x_{1:t},U_{1:t-1}=u_{1:t-1})$ and a similar notation for conditional probability.
For a strategy pair $(g^1,g^2)$, we use $\prob^{(g^1,g^2)}(\cdot)$ (resp. $\ee^{(g^1,g^2)}[\cdot]$) to indicate that the probability (resp. expectation) depends on the choice of the strategy pair.
We use $-i$ to denote agent/agents other than agent $i$. $U \sim \lambda$ indicates that $U$ is randomly distributed according to the distribution $\lambda$.


\section{Problem Formulation}\label{sec:problem_formulation}

Consider a discrete-time system with two agents. The system state consists of three components - a shared state and two local states, one for each agent.  $X^i_t \in \mathcal{X}$ denotes the local state of agent $i$, $i=1,2$, at time $t$ and $X^0_t \in \mathcal{X}^0$ denotes the shared state at time $t$. $X_t$ denotes the triplet $(X^0_t, X^1_t,X^2_t)$.     Let  $U^i_t \in \mathcal{U}$ denote the control action of agent $i$ at time $t$. $U_t$ denotes the pair $(U^1_t,U^2_t)$.
 The dynamics of the shared and local states are as follows:
 \begin{equation}\label{eq:dyna_global}
    X^0_{t+1} =f^0_t(X^0_t, U_t, W^0_t),
 \end{equation}
\begin{equation}\label{eq:dyna}
    X^i_{t+1}=f_t(X^{i}_{t},X^0_t, U_t,W^i_t),~~ i=1,2, 
\end{equation}
where $W^0_t \in \mathcal{W}^0$ and $W^i_t \in \mathcal{W}$ are random disturbances with $W^0_t$ having the probability distribution $p^0_W$ and $W^i_t, i=1,2,$ having the probability distribution $p_W$. We use $W_t$ to denote the triplet $(W^0_t, W^1_t, W^2_t)$. Note that the next local state of agent $i$ depends on its own current local state, the shared state and the control actions of both the agents. Also note that the function $f_t$ in \eqref{eq:dyna} is the same for both agents. The initial  states $X^0_1,X^1_1, X^2_1$ are independent random variables with $X^0_1$ having the probability distribution $\alpha^0$ and $X^i_1, i=1,2,$ having the probability distribution $\alpha$. The initial states $X^0_1, X^1_1,X^2_1$ and the disturbances  $W^0_t, W^i_t, t \ge 1$, $i=1,2$, are independent discrete random variables. These will be referred to as the primitive random variables of the system.

\subsection{Information structure and strategies}
The information available to agent $i$ , $i=1,2,$ at time $t$ consists of two parts:
\begin{enumerate}
    \item Common  information $C_t$ - This information is available to both agents\footnote{$C_t$ does not have to be the \emph{entirety} of information that is available to both agents; it simply cannot include anything that is not available to both agents.}. $C_t$ takes values in the set $\mathcal{C}_t$.
    \item Private information $P^i_t$ - Any information available to agent $i$ at time $t$ that is not included in $C_t$ is  included in $P^i_t$.
    $P^i_t$ takes values in $\mathcal{P}_t$. (Note that the space of private information is the same for both agents.) We use $P_t$ to denote the pair $(P^1_t, P^2_t)$.
\end{enumerate}
$C_t$ should be viewed as an ordered list (or row vector) of some of the system variables that are known to both agents. Similarly, $P^i_t$ should be viewed as an ordered list (or row vector).

We assume that $C_t$ is non-decreasing with time, i.e., any variable included in $C_t$ is also included in $C_{t+1}$. Let $Z_{t+1}$ be  the increment in common information from time $t$ to $t+1$. We assume the following dynamics for $Z_{t+1}$ and $P^i_{t+1}$ ($i=1,2$):
\begin{align*}
    Z_{t+1} = \zeta_t(X_t, P_t, U_t, W_{t}); P^i_{t+1} = \xi^i_t(X_t, P_t, U_t, W_{t}), \label{eq:zeq} 
\end{align*}

Agent $i$ uses its information at time $t$ to select a probability distribution $\delta{U}^i_t$ on the action space $\mathcal{U}$. We will refer to $\delta{U}^i_t$ as agent $i$'s \emph{behavioral action} at time $t$.  The action $U^i_t$ is then randomly generated according to the chosen distribution, i.e., $U^i_t \sim \delta{U}^i_t $. Thus, we can write
\begin{equation}\label{strategy_f_1}
    \delta{U}^i_t = g^i_t(P^{i}_{t}, C_t),
\end{equation}
where    $g^i_t$ is a mapping from $\mathcal{P}_t \times \mathcal{C}_t$  to $\Delta(\mathcal{U})$.
The function $g^i_t$ is referred to as the control strategy of agent $i$ at time $t$.
  The collection of functions $g^i:= (g^i_1, \ldots, g^i_T)$ is referred to as the control strategy of agent $i$. Let $\mathcal{G}$ denote the set of all possible strategies for agent $i$. (Note that the set  of all possible strategies is the same for the two agents since  the private information space, the common information space and the action space are the same for the two agents.)

  We use $(g^1,g^2)$ to denote the pair of strategies being used by agent 1 and agent 2 respectively. We are interested in the finite horizon total expected cost incurred by the system which is defined as:
  \begin{equation}\label{eq:cost2}
  J(g^1,g^2):=\ee^{(g^1,g^2)}\left[\sum_{t=1}^{T}k_t( {X}_t,{U}_t)\right],
  \end{equation}
 where $k_t$ is the cost function at time $t$. Our focus will be on the case of \emph{symmetric strategies}, i.e., the case where both agents use the same control strategy. When referring to symmetric strategies, we will drop the superscript $i$ in $g^i$  and denote a symmetric strategy pair by $(g,g)$.

\textit{Symmetric strategy optimization problem (\textbf{Problem P1}):} 
Our objective is to find a symmetric strategy pair  that achieves the minimum total expected cost among all symmetric strategy pairs. That is, we are looking for a strategy $g \in \mathcal{G}$ such that 
\begin{equation}\label{eq:optimalg}
    J(g,g) \leq J(h,h),~ ~\forall h \in \mathcal{G}.
\end{equation}

\begin{remark}

We assume that the randomization at each agent is done independently over time and independently of the other agent \cite{kartik2021upper}.

\end{remark}


\begin{remark}
If the  private information space, the common information space and the action space are finite, then it can be shown that the strategy space $\mathcal{G}$ is a compact space and that $J(g,g)$ is a continuous function of $g \in \mathcal{G}$. Thus, an optimal $g$ satisfying \eqref{eq:optimalg} exists.
\end{remark}


\begin{remark}
We have formulated the problem with two agents for simplicity. The number of agents can in fact be any positive integer $n$ or even a deterministic time-varying sequence $n_t$. Our results extend to these cases with only notational modifications.
\end{remark}

\begin{remark}
    Note that we are not claiming that use of symmetric strategies is always optimal -- it is not. We are simply focusing on the design of symmetric strategies for reasons mentioned in the introduction.
\end{remark}

\subsection{Some specific information structures}\label{sec:specific_IS}
We will be particularly interested in the  special cases of Problem P1 described below. Each case corresponds to a different information structure.
In each case, the shared state history until time $t$, $X^0_{1:t}$, and the action history until $t-1$, $U_{1:t-1}$ are part of common information $C_t$. 

1. \emph{One-step delayed sharing information structure:} In this case, each agent knows its own local state history until time $t$ and the local state history of the other agent until time $t-1$. 
Thus, the common and private information available to agent $i$ at time $t$  is given by 
\begin{align}
   &C_{t}=(X^0_{1:t},U_{1:t-1}, X^{1,2}_{1:t-1}); ~P^i_t = X^i_{t}.
\end{align}
We refer to the instance of Problem P1 with this information structure as \textbf{Problem P1a}.

2. \emph{Full local history information structure:}  In this case, each agent knows its own local state history until time $t$ but does not observe the local states of the other agent. Thus, the common and private information available to agent $i$ at time $t$  is given by 
\begin{align}
   &C_{t}=(X^0_{1:t},U_{1:t-1}); ~P^i_t = X^i_{1:t}.
\end{align}
This information structure corresponds to the  control sharing information structure of \cite{mahajan2013optimal}. We refer to the instance of Problem P1 with this information structure as \textbf{Problem P1b}.

3. \emph{Reduced local history information structure:}  In this case, each agent knows its own \emph{current} local state but does not recall its past local states and does not observe the local states of the other agent. Thus, the common and private information available to agent $i$ at time $t$  is given by 
\begin{align}
   &C_{t}=(X^0_{1:t},U_{1:t-1}); ~P^i_t = X^i_{t}.
\end{align}
We refer to the instance of Problem P1 with this information structure as \textbf{Problem P1c}.


    Another special case of Problem P1 that might be of interest is the following: Consider a situation where the state dynamics are governed not by the vector of agents' actions but only by an aggregate effect of agents' actions. Let $A_t =a(U^1_t,U^2_t)$ denote the aggregate action. We refer to $a(\cdot,\cdot)$ as the aggregation function. Some examples of $a$ could be the sum or the maximum function. The state dynamics are as described in equations \eqref{eq:dyna_global} and \eqref{eq:dyna} except with $U_t$ replaced by $A_t$. The agents only observe the aggregate actions taken in the past but not the individual actions. The common and private information are given as: 
\begin{align}
   &C_{t}=(X^0_{1:t},A_{1:t-1}); ~P^i_t = X^i_{t}.
\end{align}
We addressed this case in Appendix \ref{App:at},

\subsection{Why are randomized strategies needed?}

In team problems, it is well-known that one can restrict agents to deterministic strategies without loss of optimality  \cite{yuksel2013stochastic}. However, since the agents are restricted to use symmetric strategies in our setup, randomization can help. This can be illustrated by the following simple example.

\emph{Example 1:}
Let $T=1$ and let $(X^0_1, X^1_1, X^2_1) =(0,0,0)$ with probability $1$. The action space is $\mathcal{U} = \{0,1\}$. The information structure is that of Problem P1c described in \ref{sec:specific_IS}.  The cost at $t=1$ is given by, $k_1({X}_1,{U}_1) = \mathds{1}_{\{U^1_{1} = U^2_{1} \}}$.

Note that the cost function penalizes the agents for taking the same action.
In this case, each agent has only two \emph{deterministic} strategies -- taking action $0$ or taking action $1$ at time $1$. If both agents use the same deterministic strategy, then, clearly, $U^1_1 = U^2_1$ and hence the expected cost incurred is $1$.

Consider now the following randomized strategy for each agent: $U^i_1 =1$ with probability $p$ and $U^i_1=0$ with probability $(1-p)$.
When the two agents use this randomized strategy,   the  expected cost is
$p^2+(1-p)^2$. 
With $p=0.5$, this cost is $0.5$ which is less than the expected cost achieved by any deterministic symmetric strategy pair. Thus, when agents are restricted to use the same strategy, they can benefit from randomization.

\section{Common information approach}\label{sec:CI_approach}

We adopt the common information approach \cite{nayyar2013decentralized} for Problem P1. This approach formulates a new decision-making problem from the perspective of a coordinator that knows the common information. At each time, the coordinator selects prescriptions that map each agent's private information to its action. The behavioral action of each agent in this problem is simply the prescription evaluated at the current realization of its private information. Since Problem P1 requires symmetric strategies for the two agents, we will require the coordinator to select \emph{identical prescriptions for the two agents}.  To make things precise, let $\mathcal{B}_t$ denote the space of all functions from $\mathcal{P}_t$ to $\Delta(\mathcal{U})$. Let $\Gamma_t \in \mathcal{B}_t$ denote the  prescription selected by the coordinator at time $t$.  Then, the behavioral action of agent $i$, $i=1,2,$ is given by: $\delta U^i_t = \Gamma_t(P^i_t)$.

As in Problem P1, agent $i$'s action $U^i_t$  is generated according to the distribution $\delta U^i_t$ using independent randomization.
The coordinator selects its prescription at time $t$ based on the common information at time $t$ and the history of past prescriptions. Thus, we can write: 
\begin{equation}
    \Gamma_t = d_t(C_t, \Gamma_{1:t-1}),
\end{equation}
where $d_t$ is a mapping from $\mathcal{C}_t \times \mathcal{B}_1 \ldots \times \mathcal{B}_{t-1}$ to $\mathcal{B}_t$.  The collection of  mappings $d:= (d_1,\ldots,d_T)$ is referred to as the coordination strategy. The coordinator's objective is to choose a coordination strategy that minimizes the finite horizon total expected cost:
\begin{equation}
    \mathcal{J}(d) := \ee^{d}\left[\sum_{t=1}^{T}k_t({X}_t,{U}_t)\right].
\end{equation}
The following lemma establishes the equivalence of the coordinator problem formulated above and the problem Problem P1.  The use of identical prescriptions by the coordinator is needed to connect the coordinator's strategy to symmetric strategies for the agents in Problem P1.
\begin{lemma}
Problem P1 and the coordinator's problem are equivalent in the following sense:\\
(i) For any symmetric strategy pair $(g,g)$, consider the following coordination strategy: 
\[ d_t(C_t) = g_t(\cdot, C_t).\]
Then, $J(g,g) = \mathcal{J}(d)$.
(ii) Conversely, for any coordination strategy $d$, consider the symmetric strategy pair defined as follows:
\[ g_t(\cdot, C_t) = d_t(C_t, \Gamma_{1:t-1} ),\]
where $\Gamma_k = d_k(C_k, \Gamma_{1:k-1})$ for $k=1,\ldots, t-1$.
\end{lemma}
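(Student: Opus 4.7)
The plan is to observe that both Problem P1 (with a symmetric pair $(g,g)$) and the coordinator's problem share the same primitive random variables, the same state and information dynamics, and the same instantaneous cost function $k_t(X_t, U_t)$. The only thing that can differ is the conditional distribution of the behavioral actions $\delta U^1_t, \delta U^2_t$ given the system history. So the whole lemma reduces to showing that, under the two correspondences described in (i) and (ii), these behavioral actions are generated by the same rule in both problems, and hence the induced joint distributions over $(X_{1:T}, P_{1:T}, C_{1:T}, U_{1:T})$ coincide, implying $J(g,g) = \mathcal{J}(d)$.

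For part (i), I would just unfold definitions. Under the symmetric strategy pair $(g,g)$, agent $i$'s behavioral action is $\delta U^i_t = g_t(P^i_t, C_t)$. Under the coordination strategy $d_t(C_t) := g_t(\cdot, C_t)$, the prescription at time $t$ is $\Gamma_t = g_t(\cdot, C_t)$, and the behavioral action of agent $i$ is $\Gamma_t(P^i_t) = g_t(P^i_t, C_t)$. These are identical functions of $(P^i_t, C_t)$, and $U^i_t$ is drawn independently from $\delta U^i_t$ in both problems. An induction on $t$ then shows that the joint distribution of the whole trajectory is the same, giving $J(g,g) = \mathcal{J}(d)$.

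For part (ii), the one subtle point is well-definedness: the expression $g_t(\cdot, C_t) = d_t(C_t, \Gamma_{1:t-1})$ references past prescriptions, and we need this to yield an honest strategy that depends only on $(P^i_t, C_t)$. I would handle this by a short induction showing that each $\Gamma_k$ is a deterministic function of $C_k$ (and hence of $C_t$, using that $C_t$ is non-decreasing). The base case is $\Gamma_1 = d_1(C_1)$; for the inductive step, if $\Gamma_{1:k-1}$ is determined by $C_{k-1}$, then $\Gamma_k = d_k(C_k, \Gamma_{1:k-1})$ is determined by $C_k$. Hence $g_t(\cdot, C_t)$ is a legitimate element of $\mathcal{G}$. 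After this, the same unfolding as in part (i) shows that agent $i$'s behavioral action under $(g,g)$ equals $\Gamma_t(P^i_t)$ as in the coordinator's problem, so again the joint distributions, and therefore the expected costs, agree.

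The main obstacle is essentially just bookkeeping rather than a substantive mathematical difficulty: one must be careful that the recursive definition in (ii) produces a function of $C_t$ alone (so that $g_t \in \mathcal{G}$), and that, in carrying out the inductive comparison of the two joint distributions, the common information update $Z_{t+1} = \zeta_t(X_t, P_t, U_t, W_t)$ and the private information update $P^i_{t+1} = \xi^i_t(X_t, P_t, U_t, W_t)$ are the same in both problems. Once the behavioral actions are shown to coincide in distribution given the past, these identical dynamics propagate the equality of joint laws forward, and equality of $J(g,g)$ and $\mathcal{J}(d)$ follows immediately from the tower property applied to $\sum_t k_t(X_t, U_t)$.
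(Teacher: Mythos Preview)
Your proposal is correct and follows the same approach as the paper, which simply invokes Proposition~3 of \cite{nayyar2013decentralized} together with the observation that identical prescriptions correspond to symmetric strategies. Your unfolding of definitions and the inductive verification that $\Gamma_{1:t-1}$ is a function of $C_t$ are exactly what that cited proposition supplies, so you have essentially written out the details behind the paper's citation.
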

\begin{proof}
The proof is based on Proposition 3 of \cite{nayyar2013decentralized} and the fact that the use of identical prescriptions for the two agents by the coordinator corresponds to the use of symmetric strategies in Problem P1.
\end{proof}

We now proceed with finding a solution for the coordinator's problem. As shown in \cite{nayyar2013decentralized}, the coordinator's belief on $(X_t, P_t)$ can serve as its information state  (sufficient statistic) for selecting prescriptions. At time $t$, the coordinator's belief is given as:
\begin{align}\label{coord:prob1_belief}
    &\Pi_t(x,p)=\prob(X_t=x, P_t =p|C_t, \Gamma_{1:(t-1)}),
\end{align}
for all $x \in \mathcal{X}^0 \times \mathcal{X} \times \mathcal{X}, p \in \mathcal{P}_t \times \mathcal{P}_t$. The belief can be sequentially updated by the coordinator as described in  Lemma \ref{lemma:belief_update} below. The lemma follows from arguments similar to those in Lemma 2 of \cite{kartik2021upper} (or Theorem 1 of \cite{nayyar2013decentralized}).

\begin{lemma} \label{lemma:belief_update}
For any coordination strategy $d$, the coordinator's belief $\Pi_t$ evolves almost surely as
\begin{equation}
    \Pi_{t+1} = \eta_t(\Pi_t, \Gamma_t, Z_{t+1}),
\end{equation}
where $\eta_t$ is a fixed transformation that does not depend on the coordination strategy.
\end{lemma}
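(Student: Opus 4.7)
The plan is to carry out a direct Bayesian update, exploiting two structural facts: $\Gamma_t$ is a deterministic function of $(C_t,\Gamma_{1:t-1})$, so conditioning on $\Gamma_t$ adds nothing beyond conditioning on $(C_t,\Gamma_{1:t-1})$; and $C_{t+1} = (C_t, Z_{t+1})$, so passing from $\Pi_t$ to $\Pi_{t+1}$ amounts to incorporating the single new observation $Z_{t+1}$ via Bayes' rule.

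Concretely, I would start from
\[\Pi_{t+1}(x',p') = \prob(X_{t+1}=x', P_{t+1}=p' \mid C_t, Z_{t+1}, \Gamma_{1:t})\]
and apply Bayes' rule with respect to $Z_{t+1}$, conditioned on $(C_t,\Gamma_{1:t})$. The numerator is a sum over $(x,p,u)$ of joint probabilities that factor as
\[\prob(X_t=x, P_t=p \mid C_t, \Gamma_{1:t-1})\,\prob(U_t = u \mid X_t=x, P_t=p, \Gamma_t)\,\prob(X_{t+1}=x', P_{t+1}=p', Z_{t+1} \mid X_t=x, P_t=p, U_t=u).\]
The first factor is exactly $\Pi_t(x,p)$. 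The second factor equals $\Gamma_t(p^1)(u^1)\Gamma_t(p^2)(u^2)$, because both agents' actions are generated by independent randomization using the common prescription $\Gamma_t$. The third factor is a fixed stochastic kernel determined by the primitive dynamics: given $(X_t,P_t,U_t)=(x,p,u)$, the next state $X_{t+1}$, next private information $P_{t+1}$, and common-information increment $Z_{t+1}$ are obtained via \eqref{eq:dyna_global}, \eqref{eq:dyna}, the private-information recursion, and $Z_{t+1}=\zeta_t(x,p,u,W_t)$, using disturbances $W_t$ whose distributions are fixed. The denominator is the analogous sum with $(x',p')$ also marginalized out.

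The step requiring the most care is justifying that the third factor depends only on $(x,p,u)$ and not on the past $(C_t,\Gamma_{1:t-1})$ or the coordination strategy. This follows from the independence assumptions on the primitive random variables: $W_t$ is independent of the primitive variables up to time $t$ and of the independent randomization coins used to produce $U_t$, while $(C_t,\Gamma_{1:t-1},X_t,P_t,U_t)$ are all measurable functions of those earlier primitives and coins. Once the three factors are identified as $\Pi_t$, $\Gamma_t$, and a fixed kernel, the resulting ratio is a fixed function $\eta_t(\Pi_t,\Gamma_t,Z_{t+1})$ not depending on the coordination strategy, which is the claim. The argument follows the same pattern as Lemma 2 of \cite{kartik2021upper} and Theorem 1 of \cite{nayyar2013decentralized}; the only change induced by the symmetric-strategy constraint is that both agents' behavioral actions come from the single prescription $\Gamma_t$, so the action-generation factor is just $\Gamma_t$ evaluated at $p^1$ and at $p^2$.
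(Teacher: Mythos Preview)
Your proposal is correct and follows precisely the approach the paper points to: the paper does not give a detailed proof but simply states that the lemma follows from arguments similar to Lemma~2 of \cite{kartik2021upper} (or Theorem~1 of \cite{nayyar2013decentralized}), which is exactly the Bayesian-update factorization you spell out. Your write-up is in fact more explicit than the paper's own treatment, and you correctly identify the only novelty here---that both agents' action-generation factors come from the single prescription $\Gamma_t$.
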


Using  the results in \cite{nayyar2013decentralized}, we can write a dynamic program for the coordinator's  problem. Recall that  $\mathcal{B}_t$ is the space of all functions from $\mathcal{P}_t$ to $\Delta(\mathcal{U})$. For a $\gamma \in \mathcal{B}_t$ and $p \in \mathcal{P}_t$, $\gamma(p)$ is a probability distribution on $\mathcal{U}$. Let $\gamma(p;u)$ denote the probability assigned to $u \in \mathcal{U}$ under the probability distribution $\gamma(p)$. 

\begin{theorem}\label{thm:dp}
The value functions  for the coordinator's dynamic program are as follows: Define $V_{T+1}(\pi_{T+1}) =0$ for every $\pi_{T+1}$. For $t \leq T$ and  for any realization $\pi_t$ of $\Pi_t$, define
\begin{align}
    V_t(\pi_t) = \min_{\gamma_t \in \mathcal{B}_t} \mathbb{E}[&k_t(X_t, U_t) + \nonumber\\ 
        &V_{t+1}(\eta_t(\pi_t, \gamma_t, Z_{t+1})) | \Pi_t = \pi_t, \Gamma_t = \gamma_t]\label{eq:DP1_new}
\end{align}

 The coordinator's optimal strategy is to pick the minimizing prescription  for each time and each $\pi_t$.
\end{theorem}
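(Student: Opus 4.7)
The plan is to derive the theorem by casting the coordinator's problem as a standard Markov decision process (MDP) whose state is the coordinator's belief $\Pi_t$ and whose action is the prescription $\Gamma_t$, and then invoking standard backward-induction arguments. The preceding Lemma~1 already reduces Problem P1 to minimizing $\mathcal{J}(d)$ over coordination strategies, so I only need to establish that this reduced problem admits the Bellman recursion \eqref{eq:DP1_new}.

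First, I would verify that $(\Pi_t, \Gamma_t)$ is a controlled Markov state. The transition part is supplied directly by Lemma~\ref{lemma:belief_update}: $\Pi_{t+1} = \eta_t(\Pi_t, \Gamma_t, Z_{t+1})$, so it remains to show that the conditional distribution of $Z_{t+1}$ given the coordinator's entire history $(C_{1:t}, \Gamma_{1:t})$ depends only on $(\Pi_t, \Gamma_t)$. This follows by writing out
\[
\PR(Z_{t+1}=z \mid C_t, \Gamma_{1:t}) = \sum_{x,p,u} \Pi_t(x,p)\,\gamma_t(p^1;u^1)\gamma_t(p^2;u^2)\,\PR(Z_{t+1}=z \mid x,p,u),
\]
using the definition of $\Pi_t$, the fact that $\Gamma_t(P^i_t)=\gamma_t(P^i_t)$ determines each agent's behavioral action, the independence of the randomizations, and the dynamics $Z_{t+1}=\zeta_t(X_t,P_t,U_t,W_t)$ with $W_t$ independent of the past. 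An analogous computation shows
\[
\EXP[k_t(X_t,U_t) \mid C_t,\Gamma_{1:t}] = \hat{k}_t(\Pi_t,\Gamma_t)
\]
for some fixed function $\hat{k}_t$, so the per-stage cost is also a function of $(\Pi_t,\Gamma_t)$ only.

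With these two facts, the coordinator's problem is a standard finite-horizon MDP with state $\Pi_t$ (taking values in the simplex over $\mathcal{X}\times\mathcal{P}_t\times\mathcal{P}_t$) and action $\Gamma_t \in \mathcal{B}_t$. I would then apply the classical backward induction argument: define $V_{T+1}\equiv 0$ and show inductively that the optimal cost-to-go from time $t$ given the history $(C_t,\Gamma_{1:t-1})$ equals $V_t(\Pi_t)$, where $V_t$ satisfies \eqref{eq:DP1_new}. The inductive step uses the tower property of conditional expectation, together with the Markov/cost decompositions derived above, to push the minimization inside the expectation and reduce the optimization over $d_t(C_t,\Gamma_{1:t-1})$ to a pointwise minimization over $\gamma_t \in \mathcal{B}_t$ for each realization $\pi_t$ of $\Pi_t$.

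The main obstacle is really a bookkeeping one rather than a conceptual one: carefully writing out the conditional probabilities $\PR(X_t=x,P_t=p,U_t=u,Z_{t+1}=z \mid C_t,\Gamma_{1:t})$ in terms of $\Pi_t$ and $\gamma_t$, and checking that the randomization-independence assumption in the Remark lets the two agents' actions factorize conditionally. Because the private information, common information, and action spaces are finite, the existence of a minimizer in \eqref{eq:DP1_new} is immediate, so no further measurable-selection argument is needed. Appealing to Proposition~3 and Theorem~1 of \cite{nayyar2013decentralized}, which handle exactly this reduction in the non-symmetric setting, lets most of this argument be cited rather than reproduced; the only new ingredient is that the coordinator is now constrained to pick a single prescription $\gamma_t$ applied identically to both agents, which has already been baked into the definition of $\mathcal{B}_t$.
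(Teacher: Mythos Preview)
Your proposal is correct and follows the same approach as the paper: the paper's proof simply observes that the coordinator's problem is a POMDP (as in \cite{nayyar2013decentralized}) and invokes the standard POMDP dynamic program, which is exactly the MDP-with-belief-state reduction you carry out in detail. You are essentially unpacking the citation that the paper leaves implicit, including the verification that $(\Pi_t,\Gamma_t)$ is a sufficient statistic and that the symmetric-prescription constraint is absorbed into the definition of $\mathcal{B}_t$.
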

\begin{proof}
As noted in \cite{nayyar2013decentralized}, the coordinator's problem can be seen as a POMDP. The theorem  is simply the POMDP dynamic program for the coordinator. 
\end{proof}

\begin{remark}
The expectation in \eqref{eq:DP1_new} should be interpreted as follows: $Z_{t+1}$ is given by \eqref{eq:zeq}, $U^i_t, i=1,2,$ is independently randomly generated according to the distribution $\gamma_t(P^i_t)$ and the joint distribution on $(X_t,P_t)$ is $\pi_t$.
\end{remark}

\begin{remark}
It can be established by backward induction that the term being minimized in \eqref{eq:DP1_new} is a continuous function of $\gamma_t$. This can be shown using an argument very similar to the one used in the proof of Lemma 3 in \cite{kartik2021common}. This continuity property along with the fact that   $\mathcal{B}_t$ is a compact set ensures that the minimum in \eqref{eq:DP1_new} is achieved.
\end{remark}

 For the instances of Problem P1 described in Problems P1a - P1c (see Section \ref{sec:problem_formulation}), the private information of an agent includes its current local state. Consequently, for these instances, the coordinator's belief is just on the private information of the agents and the current shared state.  The  following lemma shows that this belief can be factorized into beliefs on each agent's private information and a degenerate belief on the shared state.

\begin{lemma}\label{lem:lemma3}
In Problems 1a - 1c, for any realization $x^0$ of the shared state and any realizations $p^1, p^2$ of the agents' private information, 
\begin{equation}\label{eq:lemma3_eq1}
    \Pi_t(x^0,p^1,p^2) =  \delta_{X^0_t}(x^0)\Pi^1_t(p^1)\Pi^2_t(p^2),
\end{equation}
where $\Pi_t$ is the coordinator's belief (see \eqref{coord:prob1_belief}), $\Pi^1_t, \Pi^2_t$ are the marginals of $\Pi_t$ for each agent's private information and $\delta_{X^0_t}(\cdot)$ is a delta distribution located at $X^0_t$. (Recall that $X^0_t$ is part of the common information in Problems P1a-P1c.)

Further, for any coordination strategy $d$,  $\Pi^i_t, i=1,2,$ evolves almost surely as
\begin{equation}\label{eq:lemma3_eq2}
    \Pi^i_{t+1} = \eta^i_t(X^0_t, \Pi^i_t, \Gamma_t, Z_{t+1}),
\end{equation}
where $\eta^i_t$ is a fixed transformation that does not depend on the coordination strategy.
\end{lemma}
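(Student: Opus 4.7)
The plan is to prove the two claims of the lemma separately: first the factorization of $\Pi_t$, then the strategy-independent update rule for the marginals. The $\delta_{X^0_t}$ factor in \eqref{eq:lemma3_eq1} is immediate, since in each of Problems P1a--P1c the history $X^0_{1:t}$ is part of $C_t$, so the coordinator knows $X^0_t$ exactly and assigns it mass one. The substantive content is that, conditional on $(C_t, \Gamma_{1:t-1})$, the private information variables $P^1_t$ and $P^2_t$ are independent.

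I would prove this conditional independence by induction on $t$. At $t=1$, $P^i_1$ reduces to $X^i_1$ (or to $X^i_{1:1}$ in P1b), and the $X^i_1$ are independent primitive random variables, independent of $X^0_1$, so the claim holds. For the inductive step, assume $P^1_t$ and $P^2_t$ are conditionally independent given $(C_t, \Gamma_{1:t-1})$. Combining this with (i) the fact that $U^i_t$ is drawn from $\Gamma_t(P^i_t)$ by independent randomization, and (ii) the independence of the primitive noises $W^0_t, W^1_t, W^2_t$ from each other and from the history, I can strengthen the hypothesis to say that the augmented tuples $(P^1_t, U^1_t, W^1_t)$ and $(P^2_t, U^2_t, W^2_t)$ are conditionally independent given $(C_t, \Gamma_{1:t})$. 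In each of P1a--P1c, the new private information $P^i_{t+1}$ is a deterministic function of $(P^i_t, X^0_t, U_t, W^i_t)$, and the new common-information increment $Z_{t+1}$ decomposes into quantities that are either functions of the common past and $W^0_t$ or are agent-specific functions of the respective augmented tuples. A direct application of Bayes' rule then shows that the joint posterior of $(P^1_{t+1}, P^2_{t+1})$ given $(C_{t+1}, \Gamma_{1:t})$ factors as a product of marginals, completing the induction.

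For the marginal update \eqref{eq:lemma3_eq2}, I would combine the factorization with Lemma \ref{lemma:belief_update}. Since $\Pi_t$ is parameterized by $(X^0_t, \Pi^1_t, \Pi^2_t)$, marginalizing the joint update $\eta_t(\Pi_t, \Gamma_t, Z_{t+1})$ over $p^{-i}$ yields $\Pi^i_{t+1}$. Writing out Bayes' rule, the posterior of $P^i_t$ given $(C_{t+1}, \Gamma_{1:t})$ is proportional to $\Pi^i_t(p^i_t)$ times the likelihood of the agent-$i$-specific components of $Z_{t+1}$ under $(p^i_t, X^0_t, \Gamma_t)$; the likelihood of any $-i$-specific or common-information-only components of $Z_{t+1}$ is constant in $p^i_t$ and cancels against the normalizing denominator. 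Propagating through the agent-$i$ transition for $P^i_{t+1}$ then yields an update of the form $\eta^i_t(X^0_t, \Pi^i_t, \Gamma_t, Z_{t+1})$, with no dependence on $\Pi^{-i}_t$ or on the past coordination strategy.

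The main obstacle is verifying this cancellation in Problems P1b and P1c, where $Z_{t+1}$ contains $U_t = (U^1_t, U^2_t)$, so that in principle conditioning on the other agent's action $U^{-i}_t$ could couple the $i$-marginal to $\Pi^{-i}_t$. The key observation is that $\prob(U^{-i}_t \mid C_t, \Gamma_{1:t}) = \sum_{p} \Pi^{-i}_t(p)\,\Gamma_t(p; U^{-i}_t)$ depends on $\Pi^{-i}_t$ and $\Gamma_t$ but not on $p^i_t$, and therefore factors out of the Bayes update for agent $i$. The same logic handles $X^0_{t+1}$, whose conditional likelihood depends on $(X^0_t, U_t, p^0_W)$ but not on $p^i_t$. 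These cancellations formalize the intuition that symmetric prescriptions preserve the cross-agent conditional independence at every step, and they are precisely what allows the joint belief update to be expressed as two decoupled marginal updates.
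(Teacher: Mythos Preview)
Your proposal is correct and follows essentially the same route as the paper: establish conditional independence of $P^1_t$ and $P^2_t$ given $(C_t,\Gamma_{1:t-1})$, use it to factor $\Pi_t$, and then carry out a Bayes-rule computation in which the agent-$(-i)$ likelihood factor cancels between numerator and denominator to obtain the decoupled marginal update. The only notable difference is presentational: the paper invokes \cite{mahajan2013optimal} for the conditional-independence step in P1b/P1c and then writes out the Bayes updates explicitly case by case, whereas you sketch a self-contained inductive proof of conditional independence and argue the cancellation abstractly; both arrive at the same conclusion by the same mechanism.
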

\begin{proof}
See Appendix I.
\end{proof}
Because of the above lemma, we can replace $\Pi_t$ (and its realizations $\pi_t$) by  $( \Pi^1_t, \Pi^2_t,X^0_t)$ (and the corresponding realizations $( \pi^1_t, \pi^2_t, x^0_t)$) in the dynamic program of Theorem \ref{thm:dp} for Problems P1a -P1c.





\section{Comparison of Problems 1b and 1c}\label{sec:compare_problems}

The information structures in Problems P1b and P1c differ only in the private information available to the agents -- in P1b, each agent know its entire local state history whereas in P1c each agent knows only its current local state. \emph{If the agents were not restricted to use the same strategies,} it is known that the two information structures are equivalent. That is, if a (possibly asymmetric) strategy pair $(g^1,g^2)$ is optimal for the information structure in Problem P1c, then it is also optimal for the information structure  in Problem P1b \cite{mahajan2013optimal}. This effectively means that  agents can ignore their past local states without any loss in performance. However, such an equivalence of the two information structures may not hold when agents are restricted to use symmetric strategies. In other words, an optimal symmetric strategy in Problem P1c may not be optimal for Problem P1b; and the optimal performance in Problem P1c  may be strictly worse than the optimal performance in Problem P1b.  We explore this point in more detail below.

One approach for establishing that agents can ignore parts of their private information that has been commonly used in prior literature on multi-agent/decentralized systems is the agent-by-agent (or person-by-person approach)  \cite{ho1980team,nayyar2013decentralized}.
This approach works as follows: We start by fixing strategies of all agents other than agent $i$ to arbitrary choices and then show that agent $i$ can make decisions based on a subset or a function of its private information without compromising performance. If this reduction in agent $i$'s information holds for any arbitrary strategy of other agents, we can conclude that this reduction would hold for globally optimal strategies as well.  By repeating this  argument for all agents, one can reduce the private information of all agents without losing performance. The problem with this approach is that it cannot accommodate the restriction to symmetric strategies. The  reduced-information based strategies  obtained using this approach  may or may not be symmetric. Thus, we cannot adopt this approach for reducing agents' private information in Problem P1b.

 Another  approach for reducing private information that has been used in some game-theoretic settings \cite{kartik2021common} involves the use of conditional probabilities of actions given reduced information. To see how this approach can be used,  let's consider  an arbitrary (possible asymmetric) strategy pair $(g^1, g^2)$ for the information structure of Problem P1b and define the following conditional probabilities for $i=1,2$: 
\begin{align}\label{eq:cond_prob1}
  \prob^{({g}^1,{g}^2)}[U^i_t=u|&X^i_t=x, C_t=c_t].
\end{align}
Note that \eqref{eq:cond_prob1} specifies a probability distribution on $\mathcal{U}$ for each $x$ and $c_t$. Thus, it can be viewed as a valid strategy for agent $i$ \emph{under the information structure of Problem P1c}. This observation lets us define the following reduced-information strategies for the agents:
\begin{align}
    \bar{g}^i_t(x,c_t) := \prob^{({g}^1,{g}^2)}[U^i_t=\cdot|&X^i_t=x, C_t=c_t], ~i=1,2. \label{eq:gbar}
\end{align}
Further, it can be shown that the above construction  ensures that the joint distributions  of $(X_t,U_t,C_t)$ under strategies $(g^1,g^2)$ and $(\bar{g}^1,\bar{g}^2)$ are the same for all $t$. This, in turn, implies that $J(\bar{g}^1,\bar{g}^2) = J(g^1,g^2)$.
This argument establishes that there is a reduced-information strategy pair with the same performance as an arbitrary full-information strategy pair. Thus, the optimal performance with reduced-information strategies must be the same as the optimal performance with full-information strategies for the information structure of Problem P1b.

 We can try to use the above argument for symmetric strategy pairs. We start with an arbitrary symmetric strategy pair $(g,g)$ in Problem P1b and use \eqref{eq:gbar} to define a reduced-information strategy pair that achieves the same performance as $(g,g)$. The problem with this argument is that even though we started with a symmetric strategy pair $(g,g)$, the reduced-information strategy pair constructed by \eqref{eq:gbar} need not be symmetric. Hence, this reduced-information strategy pair may not be  a valid solution for Problem P1c. We illustrate this point in the following example.

\emph{Example 2:}  Consider a setting where there is no shared state, the action space is $\mathcal{U} = \{a, b\}$ and  the local states are i.i.d. (across time and across agents). Each local state is a  Bernoulli (1/2) random variable. Consider the  symmetric strategy pair $(g,g)$ for Problem P1b where $g_1$ (the strategy at $t=1$)  is:
\begin{equation}
g_1(u^i_1=a|x^i_1)=
 \begin{cases}
    \alpha, ~ \text{if}~  x^i_1=0\\
    \beta, ~ \text{if}~  x^i_1=1,
  \end{cases} \label{strategy2}
\end{equation}
where $0 \leq \alpha, \beta \leq 1$.
And $g_2$ (the strategy at  $t=2$)  is:
\begin{equation}
g_2(u^i_2=a|x^i_1,x^i_2,u^1_1,u^2_1)=
 \begin{cases}
    \alpha, ~ \text{if} ~ x^i_1=x^i_2\\
    \beta, ~ \text{if} ~ x^i_1\neq x^i_2.
  \end{cases} \label{strategy2}
\end{equation}
We now use \eqref{eq:gbar} to define a reduced-information strategy. Even though we started with a symmetric strategy pair for the two agents, the conditional probability on the right hand side of \eqref{eq:gbar} may  be different for the two agents. To see this, consider $t=2$ and   $C_2 = (U^1_1, U^2_1)=(a,b)$ and $X^i_2=0$. Then, for agent 1: 
\begin{align}
    &\prob^{(g,g)}(U^1_2=a|X^1_2=0,U^1_1=a,U^2_1=b) \notag \\
    &=\prob^{(g,g)}(U^1_2=a,X^1_1=0|X^1_2=0,U^1_1=a,U^2_1=b)\notag\\
    &+\prob^{(g,g)}(U^1_2=a,X^1_1=1|X^1_2=0,U^1_1=a,U^2_1=b)\notag\\
    &=\alpha \prob^{(g,g)}(X^1_1=0|X^1_2=0,U^1_1=a,U^2_1=b) \notag\\
    &+ \beta \prob^{(g,g)}(X^1_1=1|X^1_2=0,U^1_1=a,U^2_1=b)\notag\\
    &=\alpha \left(\frac{\alpha}{\alpha+\beta}\right) + \beta \left(\frac{\beta}{\alpha+\beta}\right)=\frac{\alpha^2+\beta^2}{\alpha+\beta} 
    \label{eq:expresssion1}
\end{align}
On  the other hand, a similar calculation  for agent 2 shows that: $\prob^{(g,g)}(U^2_2=a|X^2_2=0,U^1_1=a,U^2_1=b)$
\begin{align}
    &=\alpha \left(\frac{1-\alpha}{2-\alpha-\beta}\right) + \beta \left(\frac{1-\beta}{2-\alpha-\beta}\right)=\frac{\alpha+\beta-\alpha^2-\beta^2}{2-\alpha-\beta}.\label{eq:expresssion2}
\end{align}
The expressions in \eqref{eq:expresssion1} and \eqref{eq:expresssion2} are clearly different. For example, with  $\alpha=1/4$ and $\beta=1/2$, \eqref{eq:expresssion1} evaluates to $5/12$ while \eqref{eq:expresssion2} evaluates to $7/20$.
Thus, the reduced-information strategies constructed by \eqref{eq:gbar} are not symmetric and, therefore, invalid for Problem P1c.

\subsection{Special cases}
In this section, we present two special cases under which Problems P1b and P1c can be shown to be equivalent, i.e., we can show that an optimal strategy for Problem P1c is also optimal for Problem P1b.

\subsubsection{Specialized cost}
We assume that the cost function at each time $t$ is non-negative, i.e.,  $k_t(X^0_t,X^1_t,X^2_t,U^1_t,U^2_t) \geq 0$.
Further, we assume that for each possible local state $x^i$ of agent $i$ there exists an action $m(x^i)$ such that   $k_t(x^0,x^1,x^2,m(x^1),m(x^2))=0$ for all $x^0 \in \mathcal{X}^0$. An example of such a cost function is  $k_t(X^0_t,X^1_t,X^2_t,U^1_t,U^2_t) = (X^1_t - U^1_t)^2[(X^2_t  -U^2_t)^2 + 1 ] + (X^2_t  -U^2_t)^2,$ where the states and actions are integer-valued.

Recall that in Problem P1b the prescription space at time $t$ is the space of functions from $\mathcal{X}^t$ to $\Delta(\mathcal{U})$ and in Problem P1c the prescription space is the space of functions from $\mathcal{X}$ to $\Delta(\mathcal{U})$. Using the dynamic programs for Problems P1b and P1c with the specialized cost above, we can show that  optimal prescriptions in both problems effectively coincide with the mapping $m$ from $\mathcal{X}$ to $\mathcal{U}$\footnote{With a slight abuse of notation, the function $m$ from $\mathcal{X}$ to $\mathcal{U}$ can be viewed as a deterministic prescription from $\mathcal{X}$ to $\Delta(\mathcal{U})$ or from $\mathcal{X}^t$ to $\Delta(\mathcal{U})$. }.

\begin{lemma}\label{thm:dpcost}
The value functions  for the coordinator's dynamic programs in Problems P1b and P1c can be written as follows:  For $t \leq T$ and  for any realization $\pi^1_{t},\pi^2_{t},x^0_{t}$ of $\Pi^1_{t},\Pi^2_{t},X^0_{t}$,
\begin{align}
    V_{t}(\pi^1_{t}&,\pi^2_{t},x^0_{t}) :=\min_{\gamma_{t} \in \mathcal{B}_t}Q_{{t}}(\pi^1_{t},\pi^2_{t},x^0_{t},\gamma_{t}), \label{eq:qfun_cost}
 \end{align}
 where the function $Q$ satisfies
  \begin{align}
      &Q_{{t}}(\pi^1_{t},\pi^2_{t},x^0_{t},\gamma_{t})\ge Q_{{t}}(\pi^1_{t},\pi^2_{t},x^0_{t},m)=0,
\end{align}
Consequently, the coordinator's optimal prescription is $m$ at each time.
\end{lemma}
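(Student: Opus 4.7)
The plan is to prove Lemma \ref{thm:dpcost} by backward induction on the value function, showing that $V_t \equiv 0$ for all $t$ and that the deterministic prescription $m$ attains this value in both Problems P1b and P1c. The base case follows immediately from $V_{T+1} \equiv 0$ given in Theorem \ref{thm:dp}.

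For the inductive step, suppose $V_{t+1} \equiv 0$. Substituting this into the DP recursion of Theorem \ref{thm:dp} (with $\Pi_t$ replaced by $(\Pi^1_t, \Pi^2_t, X^0_t)$ as justified by Lemma \ref{lem:lemma3}), the minimand reduces to the instantaneous expected cost
\begin{equation*}
Q_t(\pi^1_t, \pi^2_t, x^0_t, \gamma_t) = \mathbb{E}\!\left[ k_t(X_t, U_t) \,\middle|\, \Pi^1_t = \pi^1_t,\, \Pi^2_t = \pi^2_t,\, X^0_t = x^0_t,\, \Gamma_t = \gamma_t \right].
\end{equation*}
The key observation is that when $\gamma_t = m$, viewed as the deterministic prescription placing unit mass on $m(x^i)$ given input $x^i$, the realized actions satisfy $U^i_t = m(X^i_t)$ almost surely. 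By the assumed structure of the cost, $k_t(X^0_t, X^1_t, X^2_t, m(X^1_t), m(X^2_t)) = 0$ almost surely, so $Q_t(\pi^1_t, \pi^2_t, x^0_t, m) = 0$. Combined with the non-negativity of $k_t$, this yields $Q_t(\pi^1_t, \pi^2_t, x^0_t, \gamma_t) \geq 0$ for every $\gamma_t \in \mathcal{B}_t$, hence $V_t(\pi^1_t, \pi^2_t, x^0_t) = 0$ and $m$ is a minimizer. This closes the induction and establishes the claimed chain of inequalities.

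The only subtle point is ensuring that $m$ is a valid element of the prescription space $\mathcal{B}_t$ in each problem. In Problem P1c the private information is exactly $X^i_t \in \mathcal{X}$, so $m$ is directly a function from $\mathcal{P}_t = \mathcal{X}$ to $\Delta(\mathcal{U})$. In Problem P1b the private information is $X^i_{1:t} \in \mathcal{X}^t$, but $m$ extends naturally to a prescription on $\mathcal{X}^t$ by ignoring all but the last coordinate, as the footnote in the statement explicitly records. I do not anticipate any real obstacle: once Lemma \ref{lem:lemma3} is used to reduce the belief to $(\Pi^1_t, \Pi^2_t, X^0_t)$, the entire content of the argument is the zero-cost property of $m$ combined with non-negativity of $k_t$, propagated backward in time.
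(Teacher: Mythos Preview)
Your proposal is correct and follows essentially the same backward-induction argument as the paper: assume $V_{t+1}\equiv 0$, reduce $Q_t$ to the instantaneous expected cost, use non-negativity of $k_t$ together with $k_t(x^0,x^1,x^2,m(x^1),m(x^2))=0$ to conclude $V_t\equiv 0$ with $m$ as a minimizer. The paper's proof differs only in that it writes out the expectation explicitly as a sum over states and actions and treats time $T$ as a separate base step before the general inductive step.
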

\begin{proof}
See Appendix \ref{App:proof_special_cost}.
\end{proof}
Since the coordinator's optimal strategy is identical in  Problems P1b and P1c, it follows that  the optimal symmetric  strategy for the agents in the two problems is also the same, namely  $U^i_t=m(X^i_t)$. 

\subsubsection{Specialized dynamics}\label{Special Dynamics}
We consider a specialized dynamics where the local states $X^i_{1:T}, i=1,2,$  are $\mathrm{i.i.d.}$ uncontrolled random variables with probability distribution $\alpha$ and there is no shared state.
The following theorem shows the equivalence between Problems P1b and P1c in terms of optimal performance and strategies.

\begin{lemma}\label{theorem:sp_dyn}
The optimal performance in Problem P1c is the same as the optimal performance in Problem P1b. 
Further, the optimal symmetric strategy for Problem P1c is optimal for Problem P1b as well.
\end{lemma}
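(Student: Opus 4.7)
The plan is to prove both assertions together by showing that the coordinator's dynamic program from Theorem \ref{thm:dp} has the same optimal value for P1b and P1c. The ``further'' claim (optimality of the P1c strategy in P1b) will then follow from a simple lifting: any symmetric P1c strategy $g^c$ can be viewed as a P1b strategy via $\tilde g^c_t(u \mid x_{1:t}, c_t) := g^c_t(u \mid x_t, c_t)$, and this lifting leaves the joint distribution of $(X_t, U_t, C_t)$ unchanged at every time, hence $J(\tilde g^c, \tilde g^c) = J(g^c, g^c)$. This lifting also immediately gives the easy inequality $J^{P1b}_{\min} \le J^{P1c}_{\min}$.

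The real work is the reverse inequality. I would prove it by backward induction on the P1b value function, aiming to show that $V^{P1b}_t(\pi_t) = V^{P1c}_t$ for every $\pi_t$, where $V^{P1c}_t$ does not depend on any belief. The reason to expect this is the very strong independence in the specialized setting: since $X^i_t$ is i.i.d.\ across $t$ and $i$ and is unaffected by past actions, the coordinator's marginal belief on the current local state is always $\alpha$, regardless of the common information. So the P1c DP collapses into a sequence of constants $V^{P1c}_t = \sum_{s=t}^T c^\star_s$, with $c^\star_s = \min_{\gamma'} \sum_{x^1,x^2}\alpha(x^1)\alpha(x^2)\sum_{u^1,u^2}\gamma'(u^1 \mid x^1)\gamma'(u^2 \mid x^2)\,k_s(x^1,x^2,u^1,u^2)$. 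The base $V^{P1b}_{T+1} = V^{P1c}_{T+1} = 0$ is trivial, and under the inductive hypothesis the P1b Bellman equation at time $t$ is a pure stage-cost minimization over $\gamma \in \mathcal{B}_t$.

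To analyze that stage-cost minimization, I would first argue that $\pi_t$ factors as $\pi^1_t \otimes \pi^2_t$ (because there is no shared state and the two agents' histories are conditionally independent given $C_t$ in the specialized setting), and that $X^i_t$ is independent of $X^i_{1:t-1}$. This lets me rewrite the stage cost using the marginalized prescriptions $\tilde\gamma^{\pi^i}(u \mid x) := \sum_{x_{1:t-1}}\pi^i_t(x_{1:t-1})\,\gamma(u \mid x_{1:t-1}, x)$ as the bilinear form $\sum_{x^1,x^2}\alpha(x^1)\alpha(x^2)\sum_{u^1,u^2}\tilde\gamma^{\pi^1}(u^1 \mid x^1)\tilde\gamma^{\pi^2}(u^2 \mid x^2)\,k_t(x^1,x^2,u^1,u^2)$. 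Embedding a P1c prescription immediately gives one side of the desired equality, so the induction closes once I establish the opposite side as well.

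The main obstacle is precisely this last step. In general $\pi^1_t \ne \pi^2_t$, because the coordinator's posteriors are updated from the two agents' (generally distinct) past action histories, and so the pair $(\tilde\gamma^{\pi^1},\tilde\gamma^{\pi^2})$ need not be symmetric and could in principle achieve a smaller bilinear value than any symmetric $(\gamma',\gamma')$ with $\gamma' \in \mathcal{B}^{P1c}_t$. To close this gap I would exploit that $\pi^1_t$ and $\pi^2_t$ are produced by identical Bayesian update laws applied to exchangeable primitive data (i.i.d.\ local states passed through the same prescriptions), and combine this exchangeability with a symmetrization/convexity argument on the bilinear stage cost to exhibit, for any $(\tilde\gamma^{\pi^1},\tilde\gamma^{\pi^2})$ induced by some $\gamma \in \mathcal{B}^{P1b}_t$, a symmetric $\gamma' \in \mathcal{B}^{P1c}_t$ whose symmetric pair matches or beats the same bilinear value. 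Making that symmetrization step rigorous — rather than just intuitive — is where I expect the bulk of the proof effort to lie.
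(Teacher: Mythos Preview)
Your overall framework---lifting P1c strategies to P1b for the easy inequality and backward induction on the coordinator's value functions for the reverse---is exactly the paper's approach. The divergence is precisely at the step you flag as the main obstacle. The paper does not attempt any symmetrization argument. Instead it simply asserts that, in the i.i.d.\ specialization, the coordinator's marginal beliefs on private information are always the product priors $(\alpha^{1:t},\alpha^{1:t})$ in P1b and $(\alpha,\alpha)$ in P1c, irrespective of the observed action history. Under that assertion $\pi^1_t=\pi^2_t=\alpha^{1:t}$, so your marginalized prescriptions $\tilde\gamma^{\pi^1}$ and $\tilde\gamma^{\pi^2}$ automatically coincide; a single $\gamma^c_t(x;u):=\sum_{x_{1:t-1}}\alpha^{t-1}(x_{1:t-1})\,\gamma^b_t(x_{1:t-1},x;u)$ then reproduces the $Q$-value of any $\gamma^b_t$, and the induction closes with no further work. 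That is the entirety of the paper's argument for the hard direction.

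Your suspicion that this step hides a real difficulty is well founded. The coordinator does observe $U_{1:t-1}$, and since $U^i_s$ is generated from $X^i_{1:s}$ via the common prescription, the posterior on $X^i_{1:t-1}$ is in general \emph{not} $\alpha^{t-1}$; in particular $\pi^1_t\neq\pi^2_t$ whenever the two action histories differ. Worse, no exchangeability or convexity symmetrization can close the gap, because the inequality you are trying to establish is false. Take $T=2$, $\mathcal X=\mathcal U=\{0,1\}$, $\alpha$ uniform, no shared state, and $k_t(x,u)=\mathds 1_{\{u^1=u^2\}}$. In P1c, for any symmetric strategy and any realization of $C_t$, the actions $U^1_t,U^2_t$ are conditionally i.i.d.\ (each depends only on a fresh $X^i_t\sim\alpha$ and the common $C_t$), so $\prob(U^1_t=U^2_t\mid C_t)\ge\tfrac12$ and the optimal total cost is exactly $1$. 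In P1b, the symmetric strategy $U^i_1=X^i_1$ together with $U^i_2=X^i_1$ if $U^1_1\neq U^2_1$ and $U^i_2=X^i_2$ otherwise achieves total expected cost $\tfrac12+\tfrac12\cdot\tfrac12=\tfrac34$. Hence the lemma as stated fails; the paper's belief assertion is the unjustified step, and the symmetrization you were hoping to make rigorous cannot be, because the conclusion it would deliver does not hold.
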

\begin{proof}
See Appendix \ref{App:sp_dyn}.
\end{proof}
In summary, for the specialized cases described above,   one can reduce the private information of the agents without losing performance,  even with the restriction to symmetric strategies.  


\section{Comparison of Problems 1a and 1c\label{sec:compare_problems}}

The information structures in Problems P1a and P1c differ only in the common information available to the agents -- in P1a, each agent has an additional part in the common information consisting of the local state history of both agents. Since agents in Problem P1c have less information that their counterparts in Problem P1a, an optimal symmetric strategy in Problem P1c may not optimal for Problem P1a; and the optimal performance in Problem P1c may be strictly worse then the optimal performance in Problem P1a. 

\emph{Example 3:}  Consider a setting where there is no shared state, the action space is $\mathcal{U} = \{0, 1\}$ and state space is $\mathcal{X} = \{0, 1\}$. Let $T=2$ and the local states of each agent are stationary across time. The initial states $X^1_1, X^2_1$ are independent random variables with probability distribution Bernoulli (1/2). The cost at time $t=1$ is given by $k_1(X_1,U_1)=10\mathds{1}_{\{U^1_1 \neq 0,U^2_1 \neq 0\}}$ and cost at time $t=2$ is given by:
\begin{equation}
k_2(X_2,U_2)=
 \begin{cases}
    0, ~ \text{if}~  U^1_2=X^2_2~and~ U^2_2=X^1_2\\
    1, ~ \text{otherwise},
  \end{cases} \label{strategy2}
\end{equation}

Consider the  symmetric strategy pair $(g,g)$ for Problem P1a  where $g_1$ (the strategy at $t=1$)  is: $U^1_1=0, U^2_1=0$. At time $t=2$, each agent uses the following strategy: $U^1_2=X^1_1, 
 U^2_2=X^2_1$ if $X^1_1=X^2_1$ and $U^1_2=1-X^1_1, 
 U^2_2=1-X^2_1$ if $X^1_1\neq X^2_1$. This results in optimal expected cost of $0$ in Problem P1a.
In Problem P1c, it can be shown that the optimal strategy at time $t=1$ is $U^1_1=0, U^2_1=0$ and at time $t=2$, $U^i_2$ follows probability distribution Bernoulli (1/2).
The optimal expected cost is $0.75$ for the Problem P1c, which is strictly worse than optimal performance in Problem P1a. 

\subsection{Special Case}
We present a special dynamics under which problems P1a and P1c can be shown to be equivalent i.e. we can show that an optimal strategy for Problem P1a is also optimal for Problem P1c.
The dynamics of the shared and local states in the specialized dynamics problem are as follows:
 \begin{equation}\label{eq:dyna_global2}
    X^0_{t+1} =f^0_t(X^0_t, U_t, W^0_t),
 \end{equation}
\begin{equation}\label{eq:dyna2}
    X^i_{t+1}=f_t(X^0_t, U_t,W^i_t),~~ i=1,2. 
\end{equation}
The shared and local state dynamics are similar to \eqref{eq:dyna_global} and \eqref{eq:dyna} except that in local dynamics the next local state doesn't depend on the current local state. In this case, we have the following result:

\begin{lemma} \label{lemma:1a-1c}
   For the specialized dynamics described in \eqref{eq:dyna_global2} - \eqref{eq:dyna2}, an optimal symmetric strategy in Problem P1c is also optimal for Problem P1a and, consequently, the optimal performance in the two problems are the same.
\end{lemma}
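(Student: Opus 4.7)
The plan is to compare the coordinator's dynamic programs for Problems P1a and P1c under the specialized dynamics \eqref{eq:dyna_global2}--\eqref{eq:dyna2} and show that they collapse to the same recursion. Once that equivalence is established, any prescription policy that is optimal for P1c is also optimal for P1a, and translating back via Lemma 1 yields the two claims of the lemma. Concretely, I would first apply Theorem \ref{thm:dp} together with Lemma \ref{lem:lemma3} to both problems, obtaining value functions $V^{\mathrm{P1a}}_t(\pi^1_t,\pi^2_t,x^0_t)$ and $V^{\mathrm{P1c}}_t(\pi^1_t,\pi^2_t,x^0_t)$ on the same factorized information-state space, with identical terminal condition $V_{T+1}\equiv 0$ and identical instantaneous-cost terms (the latter because $\mathbb{E}[k_t(X_t,U_t)\mid\Pi_t=\pi_t,\Gamma_t=\gamma_t]$ depends only on $\pi^1_t,\pi^2_t,x^0_t,\gamma_t$).

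The key structural step is to simplify the private-belief update $\eta^i_t$ of Lemma \ref{lem:lemma3} under the specialized dynamics. Since $X^i_{t+1}=f_t(X^0_t,U_t,W^i_t)$ with $W^i_t$ independent of all primitive variables up to time $t$, the conditional law of $X^i_{t+1}$ given any $\sigma$-field containing $(X^0_t,U_t)$ equals $\Pr(f_t(x^0_t,u_t,W^i_t)=\cdot)$. Hence the updated belief $\pi^i_{t+1}$ reduces to a deterministic function $\bar{\eta}_t(x^0_t,u_t)$ of the shared state and the realized joint action alone; it is independent of $\pi^i_t$, of $\gamma_t$ beyond its effect on $U_t$, and — the crucial point for this argument — of the extra component $X^{1,2}_t$ that appears in the common-information increment $Z_{t+1}$ in P1a but not in P1c. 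Substituting into the Bellman equation, the P1a recursion becomes
\begin{align*}
V^{\mathrm{P1a}}_t(\pi^1_t,\pi^2_t,x^0_t)
  = \min_{\gamma_t}\mathbb{E}\bigl[k_t(X_t,U_t) &+ V^{\mathrm{P1a}}_{t+1}\bigl(\bar{\eta}_t(x^0_t,U_t),\bar{\eta}_t(x^0_t,U_t),X^0_{t+1}\bigr)\\
  &\bigm|\Pi_t=(\pi^1_t,\pi^2_t,x^0_t),\,\Gamma_t=\gamma_t\bigr],
\end{align*}
and an identical recursion holds for $V^{\mathrm{P1c}}_t$ since the joint law of $(U_t,X^0_{t+1})$ given the information state and the prescription is the same under both problems (it depends only on $\pi^1_t,\pi^2_t,\gamma_t$, the shared-state dynamics \eqref{eq:dyna_global2}, and the independent action randomization). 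A backward induction from $V_{T+1}\equiv 0$ then gives $V^{\mathrm{P1a}}_t=V^{\mathrm{P1c}}_t$ pointwise, and any prescription achieving the minimum in the P1c recursion also achieves it in the P1a recursion.

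The final step is to read the conclusion back at the level of agent strategies using Lemma 1: an optimal symmetric strategy $g^*$ for P1c induces an optimal coordination strategy $d^*$ for P1c, which by the previous step is also an optimal coordination strategy for P1a; viewing $d^*$ as a P1a coordination strategy that ignores the extra common information $X^{1,2}_{1:t-1}$, the induced symmetric agent strategy is still $g^*$, and it attains the same optimal expected cost in both problems. The main obstacle is the rigorous verification in the second step that $\eta^i_t$ really becomes $(x^0_t,u_t)$-measurable — in particular, that conditioning additionally on $X^{1,2}_t$ (present only in P1a's $Z_{t+1}$) does not alter the belief on $X^i_{t+1}$. This requires carefully invoking the independence of $W^i_t$ from the entire past together with the fact that, under \eqref{eq:dyna2}, the current local state $X^i_t$ has been removed from the arguments of $f_t$; once this is spelled out, the remainder of the argument is a standard backward-induction and dynamic-programming equivalence of the kind used repeatedly in the common-information literature.
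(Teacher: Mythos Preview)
Your proposal is correct and follows essentially the same route as the paper's proof: the paper also shows (Lemma \ref{LEM:UPDATEspecdyn}) that under the specialized dynamics the marginal belief update collapses to a function $\hat{\eta}^i_t(x^0_t,u_t)$ of the shared state and realized action alone, then writes out the coordinator's dynamic programs for P1a and P1c on the common information-state space $(x^0_t,\pi^1_t,\pi^2_t)$, and uses backward induction to conclude $V^a_t=V^c_t$. Your write-up is, if anything, slightly more careful in spelling out why the extra increment $X^{1,2}_t$ in P1a does not affect the belief on $X^i_{t+1}$ and in explicitly invoking Lemma~1 to translate the coordinator-level equivalence back to symmetric agent strategies.
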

\begin{proof}
    See Appendix \ref{App:ac}.
\end{proof}

\section{Conclusion}\label{sec:conclusion}
In this paper, we focused on designing symmetric strategies
to optimize a finite horizon team objective. We started with a
general information structure and then considered some special
cases. We showed in a simple example that randomized
symmetric strategies may outperform deterministic symmetric
strategies. We also discussed why some of the known approaches
for reducing agents’ private information in teams may not work
under the constraint of symmetric strategies. We modified the common information approach to obtain 
optimal symmetric strategies for the agents.  This resulted in a common information based
dynamic program whose complexity  depends
in large part on the size of the private information space.
 We presented two specialized
models where private information can be reduced using simple
dynamic program based arguments. 





\bibliographystyle{ieeetr}
\bibliography{IEEEabrv,ref_learning}

\appendices

\section{Proof of Lemma \ref{lem:lemma3}}\label{App:Lemma_3}
To prove Lemma \ref{lem:lemma3}, we first show that the private information of the agents  are
conditionally independent given the common information under any strategies. For Problem P1a, this is straightforward since the disturbances in the dynamics are independent:
\begin{align*}\label{eq:indepenP1a}
    \prob(X^{1,2}_t&=x^{1,2}_t|C_t=(x^{1,2}_{1:t-1},x^0_{1:t},u_{1:t-1}))\notag\\
    =&\prob(f_{t-1}(x^1_{t-1},x^0_{t-1},u_{t-1},W^1_{t-1})=x^1_t)\times\notag\\
    &\prob(f_{t-1}(x^2_{t-1},x^0_{t-1},u_{t-1},W^2_{t-1})=x^2_t).
\end{align*}

For Problems P1b and P1c, we have the following lemma. 
\begin{lemma}[Conditional independence property] \label{LEM:INDEPEN}
Consider any arbitrary (symmetric or asymmetric) choice of agents' strategies in Problems P1b and P1c. Then, at any time $t$, the two agents' private information are conditionally independent given the common information $C_t$. That is, if $c_t$ is the realization of the common information at time $t$ then for any realization $p_{t}$ of private information, we have
\begin{equation}\label{eq:indepen}
    \prob^{(g^1,g^2)}({p}_{t}|c_t)=\displaystyle\prod_{i=1}^{2} \prob^{g^i}(p^{i}_{t}|c_t), 
    \end{equation}
    Further, $\prob^{g^i}(p^{i}_{t}|c_t)$ depends only on agent $i$' strategy and not on the strategy of agent $-i$.
\end{lemma}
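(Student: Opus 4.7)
The plan is to prove the lemma by induction on $t$. The base case $t=1$ is immediate: in both Problems P1b and P1c, $P^i_1 = X^i_1$, and since the initial states $X^1_1, X^2_1$ are independent of each other and of $X^0_1$ (which is part of $C_1$), the joint conditional $\prob(p^1_1, p^2_1 \mid c_1)$ factors trivially, and each marginal is just the prior $\alpha$, which depends on no strategy.

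For the inductive step, I fix a realization $c_{t+1} = (c_t, x^0_{t+1}, u^1_t, u^2_t)$ and expand $\prob^{(g^1,g^2)}(p^1_{t+1}, p^2_{t+1} \mid c_{t+1})$ via Bayes' rule. The key move is to write the joint $\prob(p^1_{t+1}, p^2_{t+1}, x^0_{t+1}, u^1_t, u^2_t \mid c_t)$ as a product of (i) the inductive prior $\prob(p^1_t, p^2_t \mid c_t)$, (ii) the two behavioral actions $g^1_t(p^1_t, c_t)(u^1_t)$ and $g^2_t(p^2_t, c_t)(u^2_t)$, generated with independent randomization, (iii) the shared-state transition $\prob(x^0_{t+1} \mid x^0_t, u_t)$, and (iv) the two local-state transitions $\prob(x^i_{t+1} \mid x^i_t, x^0_t, u_t)$, which are conditionally independent because $W^1_t, W^2_t$ are independent. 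The inductive hypothesis splits (i) into agent-indexed factors; (ii) and (iv) split by construction; and the shared-state factor in (iii) appears identically in the Bayes denominator and cancels. Regrouping yields $\prob(p^1_{t+1}, p^2_{t+1} \mid c_{t+1}) = \rho^1(p^1_{t+1}) \cdot \rho^2(p^2_{t+1})$, where each $\rho^i$ is built only from agent $i$'s previous marginal $\prob^{g^i}(p^i_t \mid c_t)$, the behavioral action $g^i_t$, and the transition kernel $f_t$. This establishes both the factorization and the claim that $\prob^{g^i}(p^i_t \mid c_t)$ depends on $g^i$ alone. Problem P1c follows either directly by the same calculation or by marginalizing the P1b expression over the past local states, since in P1c we have $P^i_{t+1} = X^i_{t+1}$ and marginals of products are products of marginals.

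The main obstacle is cosmetic rather than conceptual: the local dynamics $f_t(x^i_t, x^0_t, u_t, w^i_t)$ depend on the \emph{other} agent's action $u^{-i}_t$, which at first glance appears to couple the two agents' next-step private information. Conditioning on $c_{t+1}$ resolves this precisely because $u^{-i}_t$ is part of $c_{t+1}$: once it is treated as given, no residual randomness links the two agents beyond the common information. The only careful bookkeeping is in the Bayes denominator $\prob(x^0_{t+1}, u^1_t, u^2_t \mid c_t)$, which must be arranged so that the shared-state factor, common to both agents, visibly cancels and is not accidentally absorbed into one marginal or the other.
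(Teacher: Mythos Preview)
Your proposal is correct and is essentially the standard inductive argument that the paper invokes by reference: the paper's own proof consists of a single sentence pointing to \cite[Proposition~1]{mahajan2013optimal} and noting that randomized strategies require only minor modification. Your write-up supplies exactly those details---the Bayes expansion, the factorization of the action likelihoods via independent randomization, the cancellation of the shared-state transition, and the per-agent regrouping---so the approaches coincide; the only remark is that your ``marginalizing'' route from P1b to P1c implicitly uses that any P1c strategy is a history-ignoring P1b strategy, which is fine but worth stating explicitly.
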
    
\begin{proof}
The proof is analogous to the proof of \cite[Proposition 1]{mahajan2013optimal} except for the possible randomization in agents' strategies.
\end{proof}
 Using the  above conditional independence property for Problems P1a-P1c, we can now prove \eqref{eq:lemma3_eq1}.
At time $t$, the coordinator's belief is given as:
\begin{equation}
    \Pi_t(x^0_t,p^1_t,p^2_t) =  \prob(X^0_t=x^0_t,P^1_t=p^1_t,P^2_t=p^2_t|C_t,\Gamma_{1:t-1})
\end{equation}
for any realization $x^0_t$ of the global state and any realizations $p^1_t, p^2_t$ of the agents' private information.
Since $X^0_t$ is part of $C_t$, the coordinator's belief can be factorized into:
\begin{align*}
     &\prob(x^0_t,p^1_t,p^2_t|C_t,\Gamma_{1:t-1})=\delta_{X^0_t}(x^0_t)\prob(p^1_t,p^2_t|C_t,\Gamma_{1:t-1})\notag\\
     &= \delta_{X^0_t}(x^0_t)\prob(p^1_t|C_t,\Gamma_{1:t-1})\prob(p^2_t|C_t,\Gamma_{1:t-1})\notag\\
     &=\delta_{X^0_t}(x^0_t)\Pi^1_t(p^1_t)\Pi^2_t(p^2_t),
\end{align*}
where we used the above-mentioned conditional independence.

We now prove \eqref{eq:lemma3_eq2} for Problems P1a-P1c.
\subsubsection{Problem P1a}
In Problem P1a, let $\pi^i_{t+1}$ be the realization of the coordinator's marginal belief $\Pi^i_{t+1}$ for each agent's private information and $c_{t+1}$ be the realization of common information at time $t+1$.
The belief for Problem P1a is given by:
\begin{align}
    \pi^i_{t+1}(x^i_{t+1})=\prob(x^i_{t+1}|c_{t+1}=(x^{1,2}_{1:t},x^0_{1:t+1},u_{1:t}),\gamma_{1:t})
\end{align}
Using Bayes' rule, we have
\begin{align*}
    &\pi^i_{t+1}(x^i_{t+1})=\frac{\prob(x^i_{t+1},x^0_{t+1}|x^{1,2}_{1:t},x^0_{1:t},u_{1:t},\gamma_{1:t})}{\prob(x^0_{t+1}|x^{1,2}_{1:t},x^0_{1:t},u_{1:t},\gamma_{1:t})}\notag\\
    &=\frac{\prob(f_t(x^0_{t},u_t,W^0_t)=x^0_{t+1})\prob(f_t(x^i_t,x^0_{t},u_t,W^i_t)=x^i_{t+1})}{\prob(f_t(x^0_{t},u_t,W^0_t)=x^0_{t+1})}\notag\\
    &=\prob(f_t(x^i_t,x^0_{t},u_t,W^i_t)=x^i_{t+1})
\end{align*}
Thus $\pi^i_{t+1}$ is determined by $x^0_{t}$ and the increment in common information.

\subsubsection{Problem P1b}
In Problem P1b, let $\pi^i_{t+1}$ be the realization of the coordinator's marginal belief $\Pi^i_{t+1}$ for each agent's private information and $c_{t+1}$ be the realization of common information at time $t+1$.
The belief for agent $1$ is given by:
\begin{align}
    \pi^1_{t+1}(x^1_{1:t+1})=\prob(x^1_{1:t+1}|c_{t+1}=(x^0_{1:t+1},u_{1:t}),\gamma_{1:t})
\end{align}
Using Bayes' rule, we have $\pi^1_{t+1}(x^1_{1:t+1})$
\begin{align}
    &=\frac{\sum_{x^{2}_{1:t}}\prob(x^1_{1:t+1},x^0_{t+1},u_t,x^{2}_{1:t}|x^0_{1:t},u_{1:t-1},\gamma_{1:t})}{\sum_{\tilde{x}^1_{1:t+1}}\sum_{\tilde{x}^{2}_{1:t}}\prob(\tilde{x}^1_{1:t+1},x^0_{t+1},u_t,\tilde{x}^{2}_{1:t}|x^0_{1:t},u_{1:t-1},\gamma_{1:t})} \label{bel_eq11}
\end{align}
The numerator of \eqref{bel_eq11} using state dynamics, coordinator prescription and belief at time $t$, can be written as
\begin{align}
    & \prob(x^1_{t+1}|x^1_t,x^0_t,u_t)\prob(x^0_{t+1}|x^0_t,u_t)\mathds{1}_{u^1_t=\gamma(x^1_{1:t})}\notag\\
    &\times \pi^1_t(x^1_{1:t})\sum_{x^{2}_{1:t}}\mathds{1}_{u^2_t=\gamma(x^2_{1:t})}\pi^2_t(x^2_{1:t}) \label{bel_eq22}
\end{align}
Similarly the denominator can be written as
\begin{align}
 \sum_{\tilde{x}^1_{1:t+1}}\sum_{\tilde{x}^{2}_{1:t}}&\prob(\tilde{x}^1_{t+1}|\tilde{x}^1_t,x^0_t,u_t)\prob(x^0_{t+1}|x^0_t,u_t)\mathds{1}_{u^1_t=\gamma(\tilde{x}^1_{1:t})}\notag\\
    &\times \mathds{1}_{u^2_t=\gamma(\tilde{x}^2_{1:t})}\pi^1_t(\tilde{x}^1_{1:t})\pi^2_t(\tilde{x}^2_{1:t}) \label{bel_eq33}
\end{align}
Let $z^b_{t+1}:=(x^0_{t+1},u_t)$ be the increment in the common information in Problem P1b. Substituting equations \eqref{bel_eq22}, \eqref{bel_eq33} in equation \eqref{bel_eq11}, we derive $\pi^1_{t+1}(x^1_{1:t+1})$ as,
\begin{align}
    &\frac{\prob(x^1_{t+1}|x^1_t,x^0_t,u_t)\prob(x^0_{t+1}|x^0_t,u_t)\mathds{1}_{u^1_t=\gamma(x^1_{1:t})} \pi^1_t(x^1_{1:t})}{\sum_{\tilde{x}^1_{1:t+1}}\prob(\tilde{x}^1_{t+1}|\tilde{x}^1_t,x^0_t,u_t)\prob(x^0_{t+1}|x^0_t,u_t)\mathds{1}_{u^1_t=\gamma(\tilde{x}^1_{1:t})}\pi^1_t(\tilde{x}^1_{1:t})}
\end{align}
We denote the update rule described above with $\eta^i_t$, i.e.
\begin{equation}
\eta^i_t(x^0_t, \pi^1_t, \gamma_t, z^b_{t+1})
\end{equation}

\subsubsection{Problem P1c}
In problem P1c, let $\pi^i_{t+1}$ be the realization of the coordinator's marginal belief $\Pi^i_{t+1}$ for each agent's private information and $c_{t+1}$ be the realization of common information at time $t+1$.
The belief for agent $1$ is given by:
\begin{align}
    \pi^1_{t+1}(x^1_{t+1})=\prob(x^1_{t+1}|c_{t+1}=(x^0_{1:t+1},u_{1:t}),\gamma_{1:t})
\end{align}
Using Bayes' rule, we have $\pi^1_{t+1}(x^1_{t+1})$
\begin{align}
    &=\frac{\sum_{x^{1,2}_t}\prob(x^1_{t+1},x^0_{t+1},u_t,x^{1,2}_t|x^0_{1:t},u_{1:t-1},\gamma_{1:t})}{\sum_{\tilde{x}^1_{t+1},\tilde{x}^{1,2}_t}\prob(\tilde{x}^1_{t+1},x^0_{t+1},u_t,\tilde{x}^{1,2}_t|x^0_{1:t},u_{1:t-1},\gamma_{1:t})} \label{bel_eq1}
\end{align}
The numerator of \eqref{bel_eq1} using state dynamics, coordinator prescription and belief at time $t$, can be written as
\begin{align}
    & \sum_{x^{1,2}_t}\prob(x^1_{t+1}|x^1_t,x^0_t,u_t)\prob(x^0_{t+1}|x^0_t,u_t)\mathds{1}_{u^1_t=\gamma(x^1_t)}\notag\\
    &\times \mathds{1}_{u^2_t=\gamma(x^2_t)}\pi^1_t(x^1_t)\pi^2_t(x^2_t) \label{bel_eq2}
\end{align}
Similarly the denominator can be written as
\begin{align}
 \sum_{\tilde{x}^1_{t+1}}\sum_{\tilde{x}^{1,2}_t}&\prob(\tilde{x}^1_{t+1}|\tilde{x}^1_t,x^0_t,u_t)\prob(x^0_{t+1}|x^0_t,u_t)\mathds{1}_{u^1_t=\gamma(\tilde{x}^1_t)}\notag\\
    &\times \mathds{1}_{u^2_t=\gamma(\tilde{x}^2_t)}\pi^1_t(\tilde{x}^1_t)\pi^2_t(\tilde{x}^2_t) \label{bel_eq3}
\end{align}
Let $z^c_{t+1}:=(x^0_{t+1},u_t)$ be the increment in the common information in problem P1c. Substituting equations \eqref{bel_eq2}, \eqref{bel_eq3} in equation \eqref{bel_eq1}, we derive $\pi^1_{t+1}(x^1_{t+1})$ as
\begin{align}
    &\frac{\sum_{x^{1}_t}\prob(x^1_{t+1}|x^1_t,x^0_t,u_t)\prob(x^0_{t+1}|x^0_t,u_t)\mathds{1}_{u^1_t=\gamma(x^1_t)} \pi^1_t(x^1_t)}{\sum_{\tilde{x}^1_{t+1},\tilde{x}^{1}_t}\prob(\tilde{x}^1_{t+1}|\tilde{x}^1_t,x^0_t,u_t)\prob(x^0_{t+1}|x^0_t,u_t)\mathds{1}_{u^1_t=\gamma(\tilde{x}^1_t)}\pi^1_t(\tilde{x}^1_t)}
    \end{align}
We denote the update rule described above with $\eta^i_t$, i.e.
\begin{equation}
\eta^i_t(x^0_t, \pi^1_t, \gamma_t, z^c_{t+1})
\end{equation}

\label{App:Lemma_3}
\section{Proof of Lemma \ref{LEM:UPDATEspecdyn}}\label{App:lemma_special_dyn}
Then the coordinator's belief state can serve as the sufficient statistic for selecting prescriptions. Let $c_{t+1}:=(x_{1:t},x^0_{1:t+1},u_{1:t})$ be the realization of common information $C_{t+1}$ and $\gamma_{1:t}$ be the realization of the prescription $\Gamma_{1:t}$. The coordinator's belief in Problem P1a for all $x^i_{t+1} \in \mathcal{X}$ is given as:
\begin{align}\label{coord:prob1a}
    &\pi^i_{t+1}(x^{i}_{t+1})=\prob(X^i_{t+1}=x^{i}_{t+1}|x^{1,2}_{1:t},x^0_{1:t+1},u_{1:t}, \gamma_{1:t}),\notag\\
    &=\frac{\prob(X^i_{t+1}=x^i_{t+1},X^0_{t+1}=x^0_{t+1}|x^{1,2}_{1:t},x^0_{1:t},u_{1:t}, \gamma_{1:t})}{\prob(X^0_{t+1}=x^0_{t+1}|x^{1,2}_{1:t},x^0_{1:t},u_{1:t}, \gamma_{1:t})}\notag\\
    &=\frac{\prob(f^0_t(x^0_{t}, u_{t}, W^0_{t})=x^0_{t+1})\prob(f_t(x^0_{t}, u_{t},W^i_{t})=x^i_{t+1})}{\prob(f^0_t(x^0_{t}, u_{t}, W^0_{t})=x^0_{t+1})}\notag\\
    &=\prob(f_t(x^0_{t}, u_{t},W^i_{t})=x^i_{t+1}).
\end{align}

Let $c_{t+1}:=(x^0_{1:t+1},u_{1:t})$ be the realization of common information $C_{t+1}$ and $\gamma_{1:t}$ be the realization of the prescription $\Gamma_{1:t}$. The coordinator belief in Problem P1c for all $x^i_{t+1} \in \mathcal{X}$ is given as:
\begin{align}\label{coord:prob1c}
    &\Pi^i_{t+1}(x^{i}_{t+1})=\prob(X^i_{t+1}=x^{i}_{t+1}|x^0_{1:t+1},u_{1:t}, \gamma_{1:t}),\notag\\
    &=\frac{\prob(X^i_{t+1}=x^i_{t+1},X^0_{t+1}=x^0_{t+1}|x^0_{1:t},u_{1:t}, \gamma_{1:t})}{\prob(X^0_{t+1}=x^0_{t+1}|x^0_{1:t},u_{1:t}, \gamma_{1:t})}\notag\\
    &=\frac{\prob(f^0_t(x^0_{t}, u_{t}, W^0_{t})=x^0_{t+1})\prob(f_t(x^0_{t}, u_{t},W^i_{t})=x^i_{t+1})}{\prob(f^0_t(x^0_{t}, u_{t}, W^0_{t})=x^0_{t+1})}\notag\\
    &=\prob(f_t(x^0_{t}, u_{t},W^i_{t})=x^i_{t+1}).
\end{align}\label{App:lemma_special_dyn}

\section{Proof of Lemma \ref{thm:dpcost}}\label{App:proof_special_cost}
We prove the lemma by backward induction. Let's consider Problem P1b.  The value function for the coordinator's dynamic program at time $T$ can be written as follows: for any realization $\pi^1_T,\pi^2_T,x^0_T$ of $\Pi^1_T,\Pi^2_T,X^0_T$ respectively, 
 \begin{align}
  V_{T}(\pi^1_T,\pi^2_T,& x^0_T) =\min_{\gamma_T \in\mathcal{B}_T}Q_{T}(\pi^1_T,\pi^2_T,x^0_T,\gamma_T),
  \end{align}
  where
  \begin{align}
  Q_{T}(\pi^1_T,\pi^2_T,& x^0_T,\gamma_T):=\sum_{x_{1:T}}\sum_{u_T} k_T(x_T,u_T)\delta_{x^0_T}(x^0)\pi^1_T(x^{1}_{1:T}) \notag\\
  &\times \pi^2_T(x^{2}_{1:T}) \gamma_T(x^{1}_{1:T};u^1_T)\gamma_T(x^{2}_{1:T};u^2_T).
 \end{align}
$Q_{T}(\pi^1_T,\pi^2_T,x^0_T,\gamma_T)\ge 0$ because $k_T(\cdot,\cdot)$ is a non negative function.
The deterministic mapping $m$ from $\mathcal{X}$ to $\mathcal{U}$ can be viewed as a prescription $\gamma \in \mathcal{{B}_{T}}$ with $\gamma(x^{i}_{1:T};m(x^i_T))=1$.
\begin{align}
    Q_{T}(\pi^1_T,\pi^2_T,& x^0_T,m):=\sum_{x_{1:T}}\sum_{u_T} k_T(x_T,m(x^1_T),m(x^2_T))\notag\\
    &\times \pi^1_T(x^{1}_{1:T})\pi^2_T(x^{2}_{1:T})\delta_{x^0_T}(x^0)=0, \label{special_cost:DP1}
\end{align}
where we used the assumption on the cost function, namely, $k_T(x_T,m(x^1_T),m(x^2_T))=0$. Hence,
\begin{align}
    &Q_{T}(\pi^1_T,\pi^2_T,x^0_T,\gamma_T)\ge Q_{T}(\pi^1_T,\pi^2_T,x^0_T,m)=0,
  \end{align} 
and therefore,
  \begin{align}
    &V_{T}(\pi^1_T,\pi^2_T,x^0_T) =\min_{\gamma_T \in\mathcal{B}_T}Q_{T}(\pi^1_T,\pi^2_T,x^0_T,\gamma_T)=0. \label{eq:value_spec}
\end{align} 
\emph{Induction hypothesis:} Assume the coordinator's value function $V_{t+1}(\pi^1_{t+1},\pi^2_{t+1},x^0_{t+1})=0$ for any realization $\pi^1_{t+1},\pi^2_{t+1},x^0_{t+1}$ at time $t+1$. 

At time $t$ we define the function $Q_t$ as follows:
  \begin{align}
  &Q_t(\pi^1_t,\pi^2_t,x^0_t,\gamma_t):=\sum_{x_{1:t}}\sum_{u_t}\pi^1_t(x^{1}_{1:t})\pi^2_t(x^{2}_{1:t})\delta_{x^0_t}(x^0)\times\notag\\
  &\gamma_t(x^{1}_{1:t};u^1_t)\gamma_t(x^{2}_{1:t};u^2_t) k_{t}(x_t,u_t)+\mathbb{E}[V_{t+1}(\delta_{x^0_{t+1}},\notag\\
  &\Pi^1_{t+1},\Pi^2_{t+1})|(\Pi^1_t,\Pi^2_t,X^0_t,\Gamma_t)=(\pi^1_t,\pi^2_t,x^0_t,\gamma_t)]
 \end{align}
Because of the induction hypothesis, the expectation of the value function at time $t+1$ is $0$ and we can simplify $Q_{t}$   as follows:
 \begin{align}
  Q_{t}(\pi^1_{t},\pi^2_{t},x^0_{t},&\gamma_{t}):=\sum_{x_{1:{t}}}\sum_{u_{t}}\delta_{x^0_{t}}(x^0)\pi^1_{t}(x^{1}_{1:{t}})\pi^2_t(x^{2}_{1:t})\notag\\
    &\times\gamma_t(x^{1}_{1:t};u^1_{t})\gamma_t(x^{2}_{1:t};u^2_{t})k_{t}(x_t,u_t).
 \end{align}
Using the same arguments as those used for $Q_T$, it follows that 
\begin{align}
    &Q_{t}(\pi^1_t,\pi^2_t,x^0_t,\gamma_t)\ge Q_{t}(\pi^1_t,\pi^2_t,x^0_t,m)=0,
  \end{align} 
and therefore, $V_t(\pi^1_t,\pi^2_{t},x^0_t)=0$.  Thus, the induction hypothesis is true for all times.

It is clear from the above argument that the optimal prescription for the coordinator in Problem P1b is $m$ at each time and for any realization of its information state. Similar arguments can be repeated for the coordinator in Problem P1c as well.

\section{Proof of Lemma \ref{theorem:sp_dyn}}\label{App:sp_dyn}
Because of the specialized dynamics, the coordinator's belief on each agent's private information at  time $t$  is given by $\alpha^{1:t}$ for Problem P1b and  by $\alpha$ for Problem P1c.

At these beliefs, the value functions  for the coordinators  in Problems P1b and P1c are as follows: 
\begin{align}
    &V^b_t(\alpha^{1:t},\alpha^{1:t}) = \min_{\gamma^b_t \in \mathcal{B}_t} Q^b_{{t}}(\alpha^{1:t},\alpha^{1:t},\gamma^b_{t}),\notag\\
    &V^c_t(\alpha,\alpha) = \min_{\gamma^c_t \in \mathcal{B}_t} Q^c_{{t}}(\alpha,\alpha,\gamma^c_{t})\label{eq:sDP1}
\end{align}
where the functions $Q^b_t$ and $Q^c_t$ are defined as
\begin{align}
    &Q^b_{t}(\alpha^{1:t},\alpha^{1:t},\gamma^b_{t}):=\sum_{x_{1:{t}}}\sum_{u_{t}}\alpha^{1:t}(x^{1}_{1:{t}})\alpha^{1:t}(x^{2}_{1:t})\gamma^b_t(x^{1}_{1:t};u^1_{t})\notag\\
    &\times\gamma^b_t(x^{2}_{1:t};u^2_{t})k_{t}(x_t,u_t)+\mathbb{E}[V_{t+1}(\alpha^{1:t+1},\alpha^{1:t+1})],\notag\\
    &Q^c_{t}(\alpha,\alpha,\gamma^c_{t}):=\sum_{x_{{t}}}\sum_{u_{t}}\alpha(x^{1}_{t})\alpha(x^{2}_{t})\gamma^c_t(x^{1}_{t};u^1_{t})\gamma^c_t(x^{2}_{t};u^2_{t})\notag\\
    &\times k_{t}(x_t,u_t)+\mathbb{E}[V_{t+1}(\alpha,\alpha)].
\end{align}
 Using a backward inductive argument, we can show that for any $\gamma^b_{t}$ there exists a $\gamma^c_t$ such that $Q^b_{t}$ and $Q^c_{t}$ defined above are the same (such a  $\gamma^c_t$ must satisfy equations of the form: $\sum_{x^i_{1:{t-1}}}\gamma^b_{t}(x^i_{1:t})\alpha^{1:t}(x^i_{1:t})=\alpha(x^i_t)\gamma^c_t(x^i_t)$). Similarly, we can show that for any $\gamma^c_{t}$ there exists a $\gamma^b_t$ such that $Q^b_{t}$ and $Q^c_{t}$ are the same (such a $\gamma^b_t$ can be defined as $\gamma^b_t(x^i_{1:t}):=\gamma^c_t(x^i_t)$). This relationship between the two $Q$-functions 
 implies the following equation for the corresponding value functions:
\begin{align}
    V^b_t(\alpha^{1:t},\alpha^{1:t})=V^c_t(\alpha,\alpha) \label{eq:appC_value}.
\end{align}
The optimal cost in each problem is the value function at time $t=1$ evaluated at the prior belief $\alpha$. Therefore,  \eqref{eq:appC_value} at $t=1$ implies that the two problems have the same optimal performance.
Consequently, an optimal symmetric strategy  in Problem P1c will achieve the  optimal performance in Problem P1b as well. 

\section{Proof of Lemma \ref{lemma:1a-1c}} \label{App:ac}
To prove equivalence between Problems P1a and P1c, we show that the coordinator's dynamic program are identical in both cases.

Towards this we show that the coordinator belief is the same for both problems in the following lemma.
\begin{lemma}\label{LEM:UPDATEspecdyn}
For $i=1,2$ and for each $t \geq 1$, if $x^0_t,u_t$ is the realization of shared state $X^0_t$ and action $U_t$. The beliefs at time $t+1$ in the two problems are the same and given as follows:
\begin{align}
    \pi^i_{t+1}=\hat{\eta}^i_t(x^0_t,u_t)
\end{align}
where $\hat{\eta}^i_t(x^0_t,u_t)$ is the probability distribution of $f_t(x^0_t,u_t,W^i_t)$.
\end{lemma}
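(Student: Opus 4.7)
The plan is to compute the coordinator's marginal belief $\pi^i_{t+1}$ directly in each of Problems P1a and P1c under the specialized dynamics, and observe that both expressions collapse to the law of $f_t(x^0_t, u_t, W^i_t)$. This will simultaneously establish the formula in the lemma and the equality of the two beliefs.

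First I would treat Problem P1a. Its common information at time $t+1$ is $c_{t+1}=(x^{1,2}_{1:t},x^0_{1:t+1},u_{1:t})$, so applying Bayes' rule to the joint conditional law of $(X^i_{t+1},X^0_{t+1})$ given $(x^{1,2}_{1:t},x^0_{1:t},u_{1:t},\gamma_{1:t})$ yields
\begin{align*}
\pi^i_{t+1}(x^i_{t+1}) &= \frac{\prob(f^0_t(x^0_t,u_t,W^0_t)=x^0_{t+1})\,\prob(f_t(x^0_t,u_t,W^i_t)=x^i_{t+1})}{\prob(f^0_t(x^0_t,u_t,W^0_t)=x^0_{t+1})}.
\end{align*}
The crucial simplification is that under the specialized dynamics \eqref{eq:dyna2} the next local state no longer depends on $x^i_t$, so the past local states $x^{1,2}_{1:t}$ drop out of the conditional law of $X^i_{t+1}$ once $(x^0_t,u_t)$ is given; independence of $W^0_t$ and $W^i_t$ factors the numerator; and the $W^0_t$ factor cancels. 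The residual $\prob(f_t(x^0_t,u_t,W^i_t)=x^i_{t+1})$ is exactly $\hat{\eta}^i_t(x^0_t,u_t)(x^i_{t+1})$.

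Next I would treat Problem P1c, whose common information at time $t+1$ is $c_{t+1}=(x^0_{1:t+1},u_{1:t})$. Rather than summing the P1c belief update given in Appendix I over the unknown $x^{1,2}_t$, the cleanest route is to observe that $X^i_{t+1}=f_t(X^0_t,U_t,W^i_t)$ and that $W^i_t$ is independent of the $\sigma$-field generated by $(X^0_{1:t+1},U_{1:t},\Gamma_{1:t})$. Once this independence is in hand,
\begin{align*}
\pi^i_{t+1}(x^i_{t+1}) = \prob(f_t(x^0_t,u_t,W^i_t)=x^i_{t+1}) = \hat{\eta}^i_t(x^0_t,u_t)(x^i_{t+1}),
\end{align*}
which matches the P1a expression.

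The main obstacle is the independence bookkeeping in the P1c step: one must verify that $W^i_t$ is independent of $(X^0_{1:t+1},U_{1:t},\Gamma_{1:t})$. This reduces to three observations. First, $X^0_{1:t}$ and $X^{1,2}_{1:t}$ are measurable functions of the initial states together with the disturbances $W^0_{1:t-1},W^{1,2}_{1:t-1}$, all independent of $W^i_t$ by the primitive-independence assumption. Second, the action history $U_{1:t}$ is generated from these variables, the prescriptions $\Gamma_{1:t}$, and the per-agent action randomization that the paper assumes to be independent across time and across agents, hence also independent of $W^i_t$. Third, $X^0_{t+1}$ depends only on $(X^0_t,U_t,W^0_t)$ and $W^0_t$ is independent of $W^i_t$. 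Combining these yields the required independence, and the two belief expressions coincide as claimed.
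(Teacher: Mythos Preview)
Your proof is correct and follows essentially the same approach as the paper: both compute the marginal belief in each problem and show it collapses to the law of $f_t(x^0_t,u_t,W^i_t)$, with identical Bayes'-rule computations for P1a. For P1c the paper also writes a Bayes' ratio and cancels the $W^0_t$ factor, whereas you short-circuit this by arguing directly that $W^i_t$ is independent of the conditioning information---a cosmetic difference, since the paper's factorization step relies implicitly on exactly the independence you spell out.
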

\begin{proof}
See Appendix II.
\end{proof}
At time $t=1$, the beliefs are the same for both problems and the above lemma proves that the beliefs are same for all time $t$.

Recall that the private information are same for both the problems, therefore this leads to same prescription space $\mathcal{B}_t$.
The Dynamic program in Theorem \ref{thm:dp} when applied to problem P1a and P1c are as follows:
\begin{corollary}\label{thm:dpspecial_dyna}
Define $V^a_{T+1}(x^0_{t+1},\pi^1_{T+1},\pi^2_{T+1}) =0$ and $V^c_{T+1}(x^0_{t+1},\pi^1_{T+1},\pi^2_{T+1}) =0$ for every $x^0_{t+1},\pi^1_{T+1},\pi^2_{T+1}$. For $t \leq T$ and  for any realization $x^0_{t},\pi^1_{t},\pi^2_{t}$ of $X^0_{t},\Pi^1_{t},\Pi^2_{t}$, define
\begin{align}
    &V^a_t(x^0_{t},\pi^1_{t},\pi^2_{t}) = \min_{\gamma_t \in \mathcal{B}_t} \mathbb{E}[k_t(X_t, U_t) +V^a_{t+1}(\delta_{X^0_{t+1}}, \\ \notag 
    &\hat{\eta}^1_t(x^0_t,U_t),\hat{\eta}^2_t(x^0_t,U_t)) |(X^0_{t},\Pi^1_{t},\Pi^2_{t},\Gamma_t)=(x^0_{t},\pi^1_{t},\pi^2_{t},\gamma_t)]\label{eq:DP1}
\end{align}
\begin{align}
    &V^c_t(x^0_{t},\pi^1_{t},\pi^2_{t}) = \min_{\gamma_t \in \mathcal{B}_t} \mathbb{E}[k_t(X_t, U_t) +V^c_{t+1}(\delta_{X^0_{t+1}}, \\ \notag 
    &\hat{\eta}^1_t(x^0_t,U_t),\hat{\eta}^2_t(x^0_t,U_t)) |(X^0_{t},\Pi^1_{t},\Pi^2_{t},\Gamma_t)=(x^0_{t},\pi^1_{t},\pi^2_{t},\gamma_t)]\label{eq:DP2}
\end{align}
\end{corollary}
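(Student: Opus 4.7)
The plan is to reduce both Problems P1a and P1c to the \emph{same} coordinator dynamic program, then conclude equivalence. A crucial observation is that in both problems the private information is identical ($P^i_t = X^i_t$), so the prescription spaces $\mathcal{B}_t$ (functions from $\mathcal{X}$ to $\Delta(\mathcal{U})$) coincide; only the common information differs. Thus the only potential difference between the two DPs lies in the information state (the coordinator's belief) and its update rule.

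First, I would prove Lemma \ref{LEM:UPDATEspecdyn}, namely that in both problems the coordinator's marginal belief on agent $i$'s current local state at time $t+1$ depends only on $(x^0_t,u_t)$ and equals the distribution of $f_t(x^0_t,u_t,W^i_t)$. For Problem P1c this is essentially the computation shown in Appendix I, specialized to the dynamics \eqref{eq:dyna2}: the extra local state $x^i_t$ that appeared under the general dynamics drops out because $f_t$ no longer depends on it. For Problem P1a, the additional common information $x^{1,2}_{1:t-1}$ must be shown to be irrelevant. I would invoke the conditional independence property (Lemma \ref{LEM:INDEPEN}, which in P1a holds directly from independence of the disturbances) to factorize the joint belief, and then apply Bayes' rule along the lines of \eqref{coord:prob1a}: since $X^i_{t+1}=f_t(X^0_t,U_t,W^i_t)$ and $W^i_t$ is independent of all past primitive random variables, the past local state history cancels out, yielding $\pi^i_{t+1}(x^i_{t+1}) = \prob(f_t(x^0_t,u_t,W^i_t) = x^i_{t+1}) = \hat{\eta}^i_t(x^0_t,u_t)(x^i_{t+1})$.

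Second, with the beliefs matching, I would compare the two DPs in Corollary \ref{thm:dpspecial_dyna} and prove by backward induction that $V^a_t(x^0_t,\pi^1_t,\pi^2_t) = V^c_t(x^0_t,\pi^1_t,\pi^2_t)$ for every $(x^0_t,\pi^1_t,\pi^2_t)$. The base case at $t=T+1$ is trivial since both value functions are defined to be zero. For the induction step, observe that for any prescription $\gamma_t \in \mathcal{B}_t$, the per-stage cost term $\EXP[k_t(X_t,U_t) \mid \Pi^1_t=\pi^1_t,\Pi^2_t=\pi^2_t,X^0_t=x^0_t,\Gamma_t=\gamma_t]$ depends only on $(\pi^1_t,\pi^2_t,x^0_t,\gamma_t)$, not on which common information was used to produce those beliefs. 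Similarly, the continuation term involves $V^{(\cdot)}_{t+1}$ evaluated at beliefs updated through the \emph{same} map $\hat{\eta}^i_t(x^0_t,U_t)$, so the induction hypothesis gives equal continuation values. Consequently the minimizers coincide.

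Third, I would translate this back to strategies via Lemma 1 (equivalence of symmetric strategies and coordination strategies). If $g^*$ is optimal for Problem P1c, its associated coordinator prescriptions attain the minimum in the shared DP, and therefore also attain the minimum in Problem P1a's DP; the corresponding symmetric strategy in P1a is simply $g^*$ itself, which depends only on $(X^i_t, X^0_{1:t}, U_{1:t-1})$ and hence is well-defined in P1a. Matching value functions at $t=1$ then give equal optimal costs. The main obstacle is the first step: making sure the cancellation of $x^{1,2}_{1:t-1}$ in the Problem P1a belief computation is rigorously justified via the conditional independence of the future disturbances from past local states—an argument that is clean under the specialized dynamics \eqref{eq:dyna2} precisely because $X^i_{t+1}$ no longer carries a memory of $X^i_t$, but would fail under the general dynamics \eqref{eq:dyna}.
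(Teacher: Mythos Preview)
Your proposal is correct and follows essentially the same route as the paper: establish that the marginal beliefs $\pi^i_{t+1}$ coincide in P1a and P1c via Lemma \ref{LEM:UPDATEspecdyn} (the specialized dynamics \eqref{eq:dyna2} make $X^i_{t+1}$ independent of $X^i_t$ so the extra common information in P1a drops out), observe that the prescription spaces $\mathcal{B}_t$ are identical because $P^i_t=X^i_t$ in both problems, and then run backward induction on the two DPs in Corollary \ref{thm:dpspecial_dyna} to conclude $V^a_t=V^c_t$ and hence equal optimal costs. One small note: when you say the additional common information $x^{1,2}_{1:t-1}$ in P1a must be shown irrelevant, the belief at time $t+1$ in P1a actually conditions on $x^{1,2}_{1:t}$ (not $t-1$), so the cancellation argument should be stated for that full history; the paper's Appendix \ref{App:lemma_special_dyn} does exactly this Bayes' rule computation.
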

Using the above Corollary, we show that in both the problems value functions are the same for any $\pi \in \delta(x)$.
 \begin{lemma}
  For $t\le T$, let $V^a_{t}(\cdot)$ be the value function for the coordinator's dynamic program in problem P1a and $V^c_{t}(\cdot)$ be the value function for the coordinator's dynamic program in  problem P1c. We show that $V^a_{t}(\pi)=V^c_{t}(\pi)$ for any $\pi \in \delta(x)$.   
 \end{lemma}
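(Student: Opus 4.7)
The plan is to proceed by backward induction on $t$, using the fact that under the specialized dynamics \eqref{eq:dyna_global2}--\eqref{eq:dyna2}, the two coordinator dynamic programs given by Corollary~\ref{thm:dpspecial_dyna} have identical structure once one accounts for the common belief update map.

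For the base case at $t = T+1$, both $V^a_{T+1}$ and $V^c_{T+1}$ are defined to be identically zero and hence coincide. For the inductive step, suppose $V^a_{t+1}(\tilde{x}^0,\tilde{\pi}^1,\tilde{\pi}^2) = V^c_{t+1}(\tilde{x}^0,\tilde{\pi}^1,\tilde{\pi}^2)$ at every reachable information state. I would then verify the three ingredients that make the right-hand sides of the two recursions identical as functions of $(x^0_t, \pi^1_t, \pi^2_t, \gamma_t)$: (a) the prescription space $\mathcal{B}_t$ is the same in both problems, since $P^i_t = X^i_t$ in each; (b) by Lemma~\ref{LEM:UPDATEspecdyn}, the updated marginal beliefs in both problems are given by the common map $\hat{\eta}^i_t(x^0_t, u_t)$; and (c) by Lemma~\ref{lem:lemma3}, in both problems the conditional joint distribution of $X_t$ given the information state factorizes as $\delta_{x^0_t} \otimes \pi^1_t \otimes \pi^2_t$, the actions $U^i_t$ are generated independently from $\gamma_t(X^i_t)$, and $X^0_{t+1}$ is determined by the common primitive $f^0_t(x^0_t, U_t, W^0_t)$. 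Combining (a)--(c) with the inductive hypothesis, the objective being minimized over $\gamma_t$ is the same in the two recursions, and minimizing over the common $\mathcal{B}_t$ yields $V^a_t = V^c_t$.

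The main step to be careful about is invoking the inductive hypothesis at the correct arguments. The value functions at $t+1$ are evaluated at $(\delta_{X^0_{t+1}}, \hat{\eta}^1_t(x^0_t, U_t), \hat{\eta}^2_t(x^0_t, U_t))$, and one must check that these triples lie in a shared reachable domain on which the hypothesis applies. This holds because, by Lemma~\ref{LEM:UPDATEspecdyn}, the reachable beliefs in both problems are generated from the same prior at $t=1$ by the same update rule $\hat{\eta}^i_t$, so the reachable sets coincide. Once the induction closes, evaluating at $t=1$ shows the two optimal costs coincide, and because any minimizer $\gamma^{\star}_t$ in one dynamic program is also a minimizer in the other, the equivalence of coordinator strategies and symmetric strategies (established earlier in the paper) yields that an optimal symmetric strategy for Problem P1c is also optimal for Problem P1a.
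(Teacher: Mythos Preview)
Your proposal is correct and follows essentially the same backward-induction approach as the paper: both start from $V^a_{T+1}=V^c_{T+1}=0$, then at each step use the common prescription space, the common belief update $\hat{\eta}^i_t$ from Lemma~\ref{LEM:UPDATEspecdyn}, and the identical distribution of $(X_t,U_t)$ under the same belief to conclude that the minimizations coincide. Your treatment is somewhat more explicit than the paper's in invoking the factorization of Lemma~\ref{lem:lemma3} and in checking that the reachable information states agree, but the underlying argument is the same.
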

 \begin{proof}
 By definition value functions for both problems at time $T+1$ are same (equal to $0$). At time $T$, the value functions evaluated at the same belief leads to $(X_T,U_T)$ has a pair having the same probability distribution. Therefore, the expectations are the same and the minimization is over identical prescription space proves that the value functions are the same in both the problems.
 So using the above argument inductively backwards in time. At any time $t$, the expectation over the cost function are same in both problems because $(X_t,U_t)$ has a pair have the same probability distribution when evaluated over the same belief. Using Lemma \ref{LEM:UPDATEspecdyn}, the beliefs at time $t+1$ in the two problems are the same and from the inductive step value functions evaluated at same belief are equal.
Hence the expectations are the same and minimization over identical prescription space proves that the value functions are same for both the problems.
 \end{proof}

The optimal cost in each problem is the expectation of value function at time $t=1$ evaluated at prior belief and initial shared state $X^0_1$. Since value functions are same for both the problems P1a and P1c, optimal costs are equal.

\label{App:ac}
\section{State dynamics influenced by an aggregate of agent actions rather than individual actions.} \label{App:at}

For problem P1d Lemma \ref{LEM:INDEPEN} doesn't hold true.
\begin{equation}\label{eq:indepen}
    \prob^{(g^1,g^2)}(x^1_{t},x^2_t|c_t)\neq\displaystyle\prod_{i=1}^{2} \prob^{g^i}(x^{i}_{t}|c_t), 
    \end{equation}
The proof of the same is given below:
\begin{proof}
\begin{align*}
    &\prob(x^1_{t},x^2_t|c_t)=  \prob(x^1_{t},x^2_t|x^0_{1:t},a_{1:t-1})  \\
    &=\frac{\sum_{x^{1,2}_{t-1}}\sum_{u_{t-1}}\prob(x^{1,2}_t,x^0_t,a_{t-1},x^{1,2}_{t-1},u_{t-1}|x^0_{1:t-1},a_{1:t-2})}{\prob(x^0_t,a_{t-1}|x^0_{1:t-1},a_{1:t-2})}\notag\\
    &=\frac{\sum_{x^{1,2}_{t-1}}\sum_{u_{t-1}}\prob(x^{1,2}_t,x^0_t,a_{t-1},x^{1,2}_{t-1},u_{t-1}|x^0_{1:t-1},a_{1:t-2})}{\sum_{\tilde{x}^{1,2}_{t-1}}\sum_{\tilde{u}_{t-1}}\prob(x^0_t,a_{t-1},\tilde{x}^{1,2}_{t-1},\tilde{u}_{t-1}|x^0_{1:t-1},a_{1:t-2})} 
\end{align*}
 The numerator of the above equation can be written as:
 \begin{align*}
&\sum_{x^{1,2}_{t-1}}\sum_{u_{t-1}}\prob(x^1_t|x^1_{t-1},x^0_{t-1},a_{t-1})\prob(x^2_t|x^2_{t-1},x^0_{t-1},a_{t-1})\notag\\
&\prob(x^0_t|x^0_{t-1},a_{t-1})\prob(a_{t-1}|u_{t-1})\prob(u^1_{t-1}|x^1_{t-1},c_t)\notag\\
&\prob(u^2_{t-1}|x^2_{t-1},c_t)\prob(x^1_{t-1}|x^0_{1:t-1},a_{1:t-2})\prob(x^2_{t-1}|x^0_{1:t-1},a_{1:t-2})
 \end{align*}
Similarly the denominator can be written as:
 \begin{align*}
&\sum_{\tilde{x}^{1,2}_{t-1}}\sum_{\tilde{u}_{t-1}}\prob(x^0_t|x^0_{t-1},a_{t-1})\prob(a_{t-1}|\tilde{u}_{t-1})\prob(\tilde{u}^1_{t-1}|\tilde{x}^1_{t-1},c_t)\notag\\
&\prob(\tilde{u}^2_{t-1}|\tilde{x}^2_{t-1},c_t)\prob(\tilde{x}^1_{t-1}|x^0_{1:t-1},a_{1:t-2})\prob(\tilde{x}^2_{t-1}|x^0_{1:t-1},a_{1:t-2})
 \end{align*}
The problem arises because of the term $\prob(a_{t-1}|u_{t-1})$  which does not factorize into a separate agent's action. Therefore, problem P1d Lemma \ref{LEM:INDEPEN} doesn't hold true.
\end{proof}
One example to prove Lemma \ref{LEM:INDEPEN} doesn't hold true for problem P1d is:
Consider a scenario in which there is no shared state, the action space is denoted as $\mathcal{U} =  \{0, 1\}$, and the state space is represented by $\mathcal{X} = \{0, 1\}$. Let $T=2$, and it is assumed that the local state of agent $1$ remains stationary over time. The dynamics of agent $2$ are defined by the transition probabilities $\prob(x^2_t=a|x^2_{t-1}=a)$ = 0.8, for $a \in  \{0, 1\}$. The initial states $X^1_1$ and $X^2_1$ are independent random variables with probability distribution Bernoulli (1/2). The aggregate action $a_t$ is the sum of individual agents actions $u^1_t $and $u^2_t$. Consider the  strategy pair $(g,g)$ for Problem P1d where $g_t$ (the strategy at time $t$)  is:
\begin{equation}
g_t(u^i_t=0|x^i_t)=
 \begin{cases}
    1, ~ \text{if}~  x^i_t=0\\
    0, ~ \text{if}~  x^i_t=1,
  \end{cases} \label{strategy_exp}
\end{equation}
To disprove Lemma 7, we first need to find the joint probability conditioned on the common information. Here in this problem the common information ($C_t$) is the aggregate action $A_t$. 
The joint probability at time $t=2$ is computed under the assumption that the common information at time $t=2$ is $a_1=u^1_1+u^2_1=1$. Specifically, we are interested in evaluating the probability of the event $x^1_2=0, x^2_2=0$ conditioned on $a_1=1$. Thus, the expression can be written as:

\begin{align*}
&\prob(x^1_2=0, x^2_2=0|c_2) = \prob(x^1_2=0, x^2_2=0|a_1=1)\notag\\
&=\sum_{x^{1,2}_1}\sum_{u^{1,2}_1}\prob(x^1_2=0, x^2_2=0,x^{1,2}_1,u^{1,2}_1|a_1=1)\notag\\
&=\frac{\sum_{x^{1,2}_1}\sum_{u^{1,2}_1}\prob(x^1_2=0, x^2_2=0,x^{1,2}_1,u^{1,2}_1,a_1=1)}{\sum_{\tilde{x}^{1,2}_1}\sum_{\tilde{u}^{1,2}_1}\prob(\tilde{x}^{1,2}_1,\tilde{u}^{1,2}_1,a_1=1)}
\end{align*}
Consider the numerator in the above equation,
\begin{align*}
\sum_{x^{1,2}_1}\sum_{u^{1,2}_1}&\prob(x^1_2=0|x^1_1)\prob(x^2_2=0|x^2_1)\prob(a_1=1|u^1_1,u^2_1)\notag\\
&\times\prob(u^1_1|x^1_1)\prob(u^2_1|x^2_1)\prob(x^1_1,x^2_1)
\end{align*}
and the denominator is,
\begin{align*}
\sum_{\tilde{x}^{1,2}_1}\sum_{\tilde{u}^{1,2}_1} \prob(a_1=1|\tilde{u}^1_1,\tilde{u}^2_1)\prob(\tilde{u}^1_1|\tilde{x}^1_1)\prob(\tilde{u}^2_1|\tilde{x}^2_1)\prob(\tilde{x}^1_1,\tilde{x}^2_1) 
\end{align*}
The joint probability is evaluated to,
\begin{align}
    \prob(x^1_2=0, x^2_2=0|a_1=1)=\frac{0.2*0.25}{0.5}=0.1
\end{align}

Now considering the marginal conditional probability,
\begin{align*}
   & \prob(x^1_2=0|a_1=1)=\sum_{x^{1,2}_1}\sum_{u^{1,2}_1}\prob(x^1_2=0, x^{1,2}_1,u^{1,2}_1|a_1=1)\notag\\
    &=\frac{\sum_{x^{1,2}_1}\sum_{u^{1,2}_1}\prob(x^1_2=0, x^{1,2}_1,u^{1,2}_1,a_1=1)}{\sum_{\tilde{x}^{1,2}_1}\sum_{\tilde{u}^{1,2}_1}\prob( \tilde{x}^{1,2}_1,\tilde{u}^{1,2}_1,a_1=1)}
\end{align*}
The numerator of the above equation can be written as,
\begin{align*}
\sum_{x^{1,2}_1}\sum_{u^{1,2}_1}\prob(x^1_2=0|x^1_1)&\prob(a_1=1|u^1_1,u^2_1)\prob(u^1_1|x^1_1)\prob(u^2_1|x^2_1)\notag\\
&\times\prob(x^1_1,x^2_1)
\end{align*}
and the denominator is,
\begin{align*}
 \sum_{\tilde{x}^{1,2}_1}\sum_{\tilde{u}^{1,2}_1}\prob(a_1=1|\tilde{u}^1_1,\tilde{u}^2_1)\prob(\tilde{u}^1_1|\tilde{x}^1_1)\prob(\tilde{u}^2_1|\tilde{x}^2_1)\prob(\tilde{x}^1_1,\tilde{x}^2_1) 
\end{align*}
the marginal conditional probability is evaluated to,
\begin{align}
\prob(x^1_2=0|a_1=1)=0.5
\end{align}
Similarly evaluating the marginal conditional probability for agent $2$,
\begin{align*}
&\sum_{x^{1,2}_1}\sum_{u^{1,2}_1}\prob(x^2_2=0|x^2_1)\prob(a_1=1|u^1_1,u^2_1)\prob(u^1_1|x^1_1)\prob(u^2_1|x^2_1)\notag\\
&\times\frac{\prob(x^1_1,x^2_1)}{\sum_{\tilde{x}^{1,2}_1}\sum_{\tilde{u}^{1,2}_1} \prob(a_1=1|\tilde{u}^1_1,\tilde{u}^2_1)\prob(\tilde{u}^1_1|\tilde{x}^1_1)\prob(\tilde{u}^2_1|\tilde{x}^2_1)\prob(\tilde{x}^1_1,\tilde{x}^2_1) }
\end{align*}
which evaluates to,
\begin{align}
\prob(x^2_2=0|a_1=1)=0.5
\end{align}
Hence proved that, $\prob(x^1_{2}=0,x^2_2=0|a_1)\neq\displaystyle\prod_{i=1}^{2} \prob(x^{i}_{2}=0|a_1).$

\begin{lemma} \label{lemma:belief_update}
In Problem P1d For any realization $x^0$ of the global state and any realizations $x^1$ and $x^2$ of the agents’ private information,
\begin{equation}
    \pi_{t}(x^0,x^1,x^2)=\delta_{X^0_t}{(x^0)}\pi^{1,2}_{t}(x^{1},x^{2})
\end{equation}
For any coordination strategy $d$, the coordinator's belief $\Pi_t$ in problem P1d evolves almost surely as
\begin{equation}
    \Pi^{1,2}_{t+1} = \eta_t(X^0_t,\Pi^{1,2}_t, \Gamma_t, A_t),
\end{equation}
where $\eta_t$ is a fixed transformation that does not depend on the coordination strategy.
\end{lemma}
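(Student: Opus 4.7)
The plan is to establish the two assertions separately: the factorization is essentially immediate from the inclusion of $X^0_t$ in the common information, while the existence of a strategy-independent update rule $\eta_t$ is a Bayes' rule calculation on the common-information increment $Z_{t+1} = (X^0_{t+1}, A_t)$. Crucially, I will \emph{not} attempt to factor $\pi^{1,2}_t$ as $\pi^1_t \pi^2_t$; the preceding discussion in this appendix already shows that conditional independence of the private states fails in Problem P1d, so I must propagate the joint belief on $(X^1_t, X^2_t)$.

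For the first part, since $X^0_t$ is part of $C_t$, it is determined by the conditioning $\sigma$-algebra, and so
\[
    \pi_t(x^0, x^1, x^2) = \Pr(X^0_t = x^0, X^1_t = x^1, X^2_t = x^2 \mid C_t, \Gamma_{1:t-1}) = \delta_{X^0_t}(x^0)\, \pi^{1,2}_t(x^1, x^2),
\]
where $\pi^{1,2}_t$ is simply the marginal of $\pi_t$ on the private states.

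For the update rule, fix a realization $c_{t+1} = (x^0_{1:t+1}, a_{1:t})$ of $C_{t+1}$ and $\gamma_{1:t}$ of $\Gamma_{1:t}$, and apply Bayes' rule with the increment $(x^0_{t+1}, a_t)$:
\[
\pi^{1,2}_{t+1}(x^1_{t+1}, x^2_{t+1}) = \frac{\Pr(x^1_{t+1}, x^2_{t+1}, x^0_{t+1}, a_t \mid c_t, \gamma_{1:t})}{\Pr(x^0_{t+1}, a_t \mid c_t, \gamma_{1:t})}.
\]
I expand the numerator by marginalizing over $(x^1_t, x^2_t, u^1_t, u^2_t)$ and use: (i) the joint belief on $(X^1_t, X^2_t)$ at time $t$ is $\pi^{1,2}_t$; (ii) conditional on the private states the agents randomize independently, so $\Pr(u^i_t \mid x^i_t, \gamma_t) = \gamma_t(x^i_t; u^i_t)$; (iii) $a_t = a(u^1_t, u^2_t)$ is deterministic; and (iv) in P1d the transitions depend only on $(x^0_t, a_t)$ with mutually independent noises $W^0_t, W^1_t, W^2_t$. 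This yields a numerator of the form
\[
    \Pr(x^0_{t+1}\mid x^0_t, a_t) \sum_{x^1_t, x^2_t} \Pr(x^1_{t+1}\mid x^1_t, x^0_t, a_t)\Pr(x^2_{t+1}\mid x^2_t, x^0_t, a_t)\,\tilde\gamma_t(x^1_t, x^2_t; a_t)\,\pi^{1,2}_t(x^1_t, x^2_t),
\]
where $\tilde\gamma_t(x^1, x^2; a) := \sum_{u^1, u^2 : a(u^1,u^2) = a} \gamma_t(x^1; u^1)\gamma_t(x^2; u^2)$. The denominator admits the same expansion with $x^1_{t+1}, x^2_{t+1}$ summed out, so the factor $\Pr(x^0_{t+1}\mid x^0_t, a_t)$ cancels and the ratio depends only on $(x^0_t, \pi^{1,2}_t, \gamma_t, a_t)$. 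This defines the map $\eta_t$ and shows that it does not depend on the coordination strategy $d$.

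The main obstacle is recognizing that the aggregation map $a(\cdot,\cdot)$ induces a coupling inside $\tilde\gamma_t(x^1, x^2; a_t)$ that does not factor across agents, because summing over the preimage $\{(u^1,u^2) : a(u^1,u^2) = a_t\}$ mixes the two per-agent kernels. This is precisely why the lemma is stated in terms of a joint belief $\pi^{1,2}_t$ rather than two marginals, and why the clean per-agent decomposition of Lemma \ref{lem:lemma3} breaks down in P1d. Once the joint belief is carried, the remaining steps are a direct Bayes-and-cancellation argument paralleling the P1c computation earlier in Appendix I.
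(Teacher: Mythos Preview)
Your proposal is correct and follows essentially the same Bayes-rule-on-the-increment argument as the paper's proof: marginalize over $(x^1_t,x^2_t,u^1_t,u^2_t)$, factor using the independent transitions and prescriptions, and observe that what remains depends only on $(x^0_t,\pi^{1,2}_t,\gamma_t,a_t)$. Your treatment is in fact slightly more complete than the paper's: you explicitly dispatch the first factorization assertion (which the paper takes for granted), you keep $x^0_{t+1}$ in the increment and show its transition factor cancels in the ratio (the paper drops it without comment), and your remark that the aggregation constraint forces a non-factorizable $\tilde\gamma_t$ makes precise why the lemma is stated for the joint belief $\pi^{1,2}_t$ rather than marginals.
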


\begin{proof}
The coordinator’s belief state can serve as a sufficient statistic for selecting prescriptions. Let $c_{t+1}:=(x^0_{t+1},a_{1:t})$ be the realization of common information $C_{t+1}$ and $\gamma_{1:t}$ be the realization of the prescription $\Gamma_{1:t}$. The coordinator belief in Problem P1d for all $x^i_{t+1} \in \mathcal{X}$ is given as:
   \begin{align*}
        &\pi^{1,2}_{t+1}(x^{1,2}_{t+1}) = \prob(x^{1,2}_{t+1}|c_{t+1}=(x^0_{t+1},a_{1:t}),\gamma_{1:t})\notag\\
        & =\frac{\prob(x^{1,2}_{t+1},a_t|x^0_{t},a_{1:t-1},\gamma_{1:t})}{\prob(a_t|x^0_{t},a_{1:t-1},\gamma_{1:t})}\notag\\
&=\frac{\sum_{x^{1,2}_{t}}\sum_{u_t}\prob(x^{1,2}_{t+1},a_t,x^{1,2}_t,u_t|x^0_{t},a_{1:t-1},\gamma_{1:t})}{\sum_{\tilde{x}^{1,2}_{t}}\sum_{\tilde{u}_t}\prob(a_t,\tilde{x}^{1,2}_t,\tilde{u}_t|x^0_{t},a_{1:t-1},\gamma_{1:t})} \label{bel_nw}
   \end{align*} 
   The numerator of the above equation can be written as:
\begin{align*}
&\sum_{x^{1,2}_{t}}\sum_{u_t}\prob(x^{1}_{t+1}|x^{1}_t,x^0_{t},a_t)\prob(x^{2}_{t+1}|x^{1}_t,x^0_{t},a_t)\prob(a_t|u_t)\notag\\
&\times\gamma(x^1_t;u^1_t)\gamma(x^2_t;u^2_t)\pi^{1,2}_t(x_t)
\end{align*}
Similarly, the denominator can be written as:
\begin{align*}
&\sum_{x^{1,2}_{t}}\sum_{u_t}\prob(a_t|u_t)\gamma(x^1_t;u^1_t)\gamma(x^2_t;u^2_t)\pi^{1,2}_t(x_t)
\end{align*}
Thus $\pi_{t+1}(\cdot)$ is determined by $x^0_t,\pi^{1,2}_t, \gamma_t, a_t$. We denote the update rule described above with $\eta_t$.
\end{proof}

\label{App:at}







\end{document}